\renewcommand{\epsilon}{\varepsilon}
\newcommand{\eqref}[1]{(\ref{#1})}
\newcommand{\tw}{\widetilde{W}}
\newcommand{\usimp}{\mathfrak{S}}
\newcommand{\bndsimp}{\widetilde{\mathfrak{S}}}
\newcommand{\combi}[2]{\pmatrix{#1 \cr #2 }}
\newtheorem{theo}{Theorem}
\newtheorem{lemma}[theo]{Lemma}
\newtheorem{prop}[theo]{Proposition}
\begin{document}
\begin{frontmatter}

\title{Analysis of market weights under volatility-stabilized market models}
\runtitle{Volatility-stabilized markets}

\begin{aug}
\author{\fnms{Soumik} \snm{Pal}\corref{}\thanksref{t1}\ead[label=e1]{soumik@math.washington.edu}}
\runauthor{S. Pal}
\affiliation{University of Washington}
\address{Department of Mathematics\\
University of Washington\\
Seattle, Washington 98195\\
USA\\
\printead{e1}} 
\end{aug}
\thankstext{t1}{Supported in part by NSF Grant DMS-10-07563.}

\received{\smonth{10} \syear{2009}}
\revised{\smonth{5} \syear{2010}}

%
\begin{abstract}
We derive the joint density of market weights, at fixed times and
suitable stopping times, of the volatility-stabilized market models
introduced by Fernholz and Karatzas in [\textit{Ann. Finan.} \textbf
{1} (2005)
149--177]. The argument rests on computing the exit density
of a collection of independent Bessel-square processes of possibly
different dimensions from the unit simplex. We show that the law of the
market weights is the same as that of the multi-allele Wright--Fisher
diffusion model, well known in population genetics. Thus, as a side
result, we furnish a novel proof of the transition density function of
the Wright--Fisher model which was originally derived by Griffiths by
bi-orthogonal series expansion.
\end{abstract}

%
\begin{keyword}[class=AMS]
\kwd[Primary ]{60J60}
\kwd{60J70}
\kwd{60J35}
\kwd{91B28}.
\end{keyword}
\begin{keyword}
\kwd{Volatility-stabilized markets}
\kwd{Bessel processes}
\kwd{Wright--Fisher model}
\kwd{Kelvin transform}
\kwd{market weights}.
\end{keyword}

\end{frontmatter}

\section{Introduction}\label{intro} The multidimensional diffusion models named
volatility-stabilized market (VSM) models were introduced by Fernholz
and Karat\-zas~\cite{ferkar05} as toy models that nevertheless reflect
some of the traits of a real-world equity market. We refer the reader
to an excellent survey article by the same authors~\cite{ferkarspt}.
These models reflect the fact that in real markets the smaller stocks
tend to have a greater volatility and a greater rate of growth than the
larger ones.

The mathematical description of the model involves a vector-valued
continuous stochastic process $X(t)=(X_1(t), X_2(t), \ldots, X_n(t)),$
where every coordinate takes nonnegative values. Their dynamics are
determined by the following stochastic differential equation (SDE)\
with a single nonnegative parameter $\delta$: for $i=1,2,\ldots,n$,
we have
%
\begin{equation}\label{vsmeq}
 \hspace*{16pt} dX_i(t) = \frac{\delta}{2}S(t)\,dt + \sqrt{X_i(t)S(t)}\,dW_i(t), \qquad
S(t)=X_1(t) + \cdots+ X_n(t).
\end{equation}
The initial vector, $X(0)$, is a point in the positive quadrant of
$\mathbb{R}
^n$, which we will denote by $\mathbb{R}^{n+}$. Here, $(W_1, W_2,
\ldots,
W_n)$ is an $n$-dimensional Brownian motion. The original article by
Fernholz and Karatzas~\cite{ferkar05} parametrizes the model by
$\alpha=\delta-1,$ which is assumed to be nonnegative. Our analysis
will consider a more general class of models where the scalar $\delta$
is replaced by a vector $(\delta_1, \ldots, \delta_n)$ of
nonnegative coordinates with only the restriction $\sum_{i=1}^n \delta
_i > 1$.

The intuition behind such a modeling becomes clear from the following
consideration. Define the vector of \textit{market weights}
%
\begin{equation}\label{mktwts}
\mu_i= \frac{X_i}{\sum_{j=1}^n X_j}, \qquad  i=1,2,\ldots,n.
\end{equation}
From an economic viewpoint, market weights are a measure of the
influence that the $i$th company exerts on the entire market. These
have been studied extensively in the literature; see, for example,
articles by Hashemi~\cite{hashemi00}, Ijiri and Simon \cite
{ijirisimon74}, Jovanovic~\cite{jovanovic82} and Simon and Bonini
\cite{simonbonini58}. For a probabilistic study in the context of
another interacting market model, see the article by Chatterjee and
Pal~\cite{chatpal}.

One can alternatively express the SDE~\eqref{vsmeq} (see \cite
{ferkar05}) by writing
\[
d\log X_i(t) = \frac{\delta- 1}{2\mu_i(t)}\,dt + \frac{1}{\sqrt{\mu
_i(t)}}\,dW_i(t), \qquad  i=1,2,\ldots,n,
\]
which makes some of the features of the model immediate and visually
appealing. The smaller $\mu_i$ is, the greater the drift and the
fluctuation of $\log X_i$ are. This is the primary empirical
observation that the model is designed to capture.

In this article, we answer one of the questions left open in the
articles~\cite{ferkar05} and~\cite{ferkarspt}: how can we describe
the behavior of the vector of random market weights $(\mu_1, \ldots,
\mu_n)(t)$ under the law of the VSM model? Similar problems have been
studied by Irina Goia in her thesis~\cite{goiathesis09}; see this for
a discussion of the relationship of these models with CIR models in
mathematical finance and their relevance in the bigger picture of
stochastic portfolio theory.

As a natural culmination of the theory we develop in this article, we
consider the following generalization of VSM models.

\begin{defn}
For any $n$ nonnegative parameters $(\delta_1, \ldots, \delta_n)$,
consider the solution of the stochastic differential equation
%
\begin{equation}\label{vsmeq2}
d \log X_i(t) = \frac{\delta_i - 1}{2\mu_i(t)} \,dt + \frac{1}{\sqrt
{\mu_i(t)}}\,dW_i(t), \qquad  i=1,2,\ldots,n.
\end{equation}
We call the unique-in-law solution of the above equation the VSM model
with parameters $(\delta_1, \ldots, \delta_n)$ and denote it by $V(\delta_1, \ldots, \delta_n)$.
\end{defn}

As mentioned in~\cite{ferkar05}, the uniqueness in law of the above
SDE is guaranteed by results in the theory of degenerate stochastic
differential equations as developed by Bass and Perkins in~\cite{basspersde}.

A crucial observation made in~\cite{ferkar05} in analyzing the VSM
model is the connection with Bessel-square (BESQ) processes. Given a
solution of SDE~\eqref{vsmeq}, one can construct $n$ independent BESQ
processes of dimension $2\delta$ (say), $Z_1, Z_2, \ldots, Z_n$, such
that the solution $X$ is linked with $Z=(Z_1, \ldots, Z_n)$ by an
appropriate time change. Explicitly,
%
\begin{eqnarray}\label{vsmbes}
X_i(t)&=& Z_i (\Lambda(t) ), \qquad  0\le t< \infty, \qquad
i=1,2,\ldots,n,\nonumber
\\[-8pt]
\\[-8pt]
\Lambda(t)&=&\frac{1}{4}\int_0^t S(u)\,du, \qquad  S(u)=X_1(u)+ \cdots+X_n(u).
\nonumber
\end{eqnarray}
A straightforward generalization of the analysis of Fernholz and
Karatzas shows that a weak solution of the system in~\eqref{vsmeq2}
can be obtained by the following mechanism. Given a solution $X$ of
$V(\delta_1, \ldots, \delta_n)$, there exist processes $Z_1, \ldots
, Z_n$ which are independent BESQ processes of respective dimensions
$2\delta_1, \ldots, 2\delta_n$ such that the time change relation
described in~\eqref{vsmbes} continues to hold.

We have the following results.
\begin{prop}\label{vsmmarket}
Let $X=(X_1, \ldots, X_n)$ have the law $V(\delta_1, \ldots, \delta
_n)$ as in~\eqref{vsmeq2}, with initial $X_i(0) = x_i \ge0$ for every
$i$. Suppose
\[
\delta_i > 0  \qquad \mbox{for all }  i    \quad \mbox{and} \quad  d= \sum_{i=1}^n
\delta_i > 1.
\]
Let $S(t)$ denote the total sum process $X_1(t)+X_2(t)+ \cdots+X_n(t)$.
Let $\varsigma_a$ be the stopping time
\[
\varsigma_a = \inf\{ t\ge0\dvtx  S(t) =a \}, \qquad
s:=\sum_{i=1}^n x_i \le a.
\]
The joint density of the market weights $\mu=(\mu_1, \ldots, \mu
_n)$ at the stopping time $\varsigma_a$ is then given by the following
expression:
%
\begin{eqnarray}\label{mwtden}
\varphi_x(y) &=& ( 1 - s/a ) \sum_{m=0}^{\infty} \frac
{\Gamma(2m + d)}{m!\Gamma(m + d)} (1+ s/a )^{-2m-d} \nonumber
\\
&&{}\times\sum_{k\ge0: k_1 + \cdots+ k_n=m}
\combi{m}{k_1\cdots k_n}
\prod_{i=1}^{n} (x_i/a)^{k_i} \operatorname{Dir}(y; k+\delta),\\
 \eqntext{y_i \ge0, \mbox{ for all $i$ and } \sum_{i=1}^n y_i =1.}
\end{eqnarray}
Here, $k+\delta$ denotes the vector $(k_1+\delta_1, \ldots,
k_n+\delta_n)$ and $\operatorname{Dir}(y;\gamma)$ is the density of the Dirichlet
distribution with parameter $\gamma$ given by
%
\begin{equation}\label{symdirden}
\operatorname{Dir}(y;\gamma)= \frac{{\prod_{i=1}^n\Gamma(\gamma
_i)}}{\Gamma( \sum_{i=1}^n \gamma_i )} \prod_{i=1}^n
y_i^{\gamma_i-1}, \qquad  y_i \ge0,\ \sum_{i=1}^n y_i =1.
\end{equation}
\end{prop}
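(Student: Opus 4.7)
The plan exploits the reduction to independent Bessel-squared processes via the time-change \eqref{vsmbes} and proceeds in four steps. \textbf{Step 1 (reduction):} Since $\Lambda$ is strictly increasing, $\varsigma_a = \Lambda^{-1}(\tau_a)$, where $\tau_a := \inf\{u\ge 0 : Z_1(u)+\cdots+Z_n(u) = a\}$ and the $Z_i$ are the independent BESQ$^{(2\delta_i)}$ processes of \eqref{vsmbes} extended to the vector $\delta$. Because $\mu_i(\varsigma_a) = Z_i(\tau_a)/a$, the task reduces to computing the joint density of $Z(\tau_a)/a$ on the unit simplex, i.e.\ the exit distribution of $(Z_1,\ldots,Z_n)$ from $\{|z|<a\}$.

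\textbf{Step 2 (Poisson--Gamma--Dirichlet mixture at fixed time).} Expanding the modified Bessel function $I_{\delta_i-1}$ in the BESQ transition density $p_t(x_i,z_i)$ as a power series exhibits $p_t(x_i,z_i)$ as a Poisson--Gamma mixture, $p_t(x_i,z_i) = \sum_{k_i\ge 0} e^{-x_i/(2t)}(x_i/(2t))^{k_i}/k_i!\cdot\gamma_{k_i+\delta_i,1/(2t)}(z_i)$. Multiplying across $i$ and invoking that independent Gammas normalised by their sum form a Dirichlet vector independent of the sum yields: conditional on $K=(K_1,\ldots,K_n)$ with $K_i\sim\mathrm{Poisson}(x_i/(2t))$ independent, the weights $\mu(t)\sim\mathrm{Dir}(K+\delta)$ are independent of $S(t)\sim 2t\cdot\mathrm{Gamma}(|K|+d)$; moreover, conditional on $|K|=m$, the split of $K$ is Multinomial$(m,x/s)$, independent of~$t$.

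\textbf{Step 3 (transfer to the stopping time).} By a strong-Markov and disintegration argument, I would argue that the mixture structure is preserved at $\tau_a$: since $\mu(t)\mid K \sim\mathrm{Dir}(K+\delta)$ depends neither on $t$ nor on $S(t)$, the density $\varphi_x$ is forced to take the form $\sum_k q_k(x,a)\,\mathrm{Dir}(y;k+\delta)$, and the $t$-independent multinomial split forces $q_k = q_m\binom{m}{k}\prod_i (x_i/s)^{k_i}$ with $q_m = q_m(s,a)$. This reduces the problem to a one-dimensional first-passage computation for the sum $S$, itself BESQ$^{(2d)}$ started from $s$.

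\textbf{Step 4 (the one-dimensional identification).} Identifying $q_m$ is the main technical obstacle. Working with the joint density of $(|K(t)|,S(t))$ from Step~2 and the Bessel-function formula for the hitting-time transform of BESQ$^{(2d)}$ at level $a$ leads to a generating series in $v = (s/a)/(1+s/a)^2$ recognisable as ${}_2F_1(d/2,(d+1)/2;d;4v)$. The Gauss--Kummer quadratic transformation ${}_2F_1(d/2,(d+1)/2;d;z) = 2^{d-1}(1-z)^{-1/2}(1+\sqrt{1-z})^{1-d}$ collapses this hypergeometric to $(1+s/a)^d/(1-s/a)$, and extracting the coefficient of $v^m$ yields $q_m = (1-s/a)\,\Gamma(2m+d)/[m!\,\Gamma(m+d)]\,(s/a)^m(1+s/a)^{-2m-d}$, completing \eqref{mwtden}. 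As a by-product, the formula will match Griffiths's transition density for the Wright--Fisher diffusion, consistent with the abstract's claim.
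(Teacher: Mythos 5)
Your Steps 1 and 2 are sound and match the paper's starting point: the time change \eqref{vsmbes} reduces everything to the exit law of independent BESQ processes from the simplex, and the fixed-time Poisson--Gamma--Dirichlet disintegration (with the multinomial split of $K$ given $|K|$) is correct. The hypergeometric evaluation in Step 4 is also consistent: the quadratic transformation does give $\sum_m \frac{\Gamma(2m+d)}{m!\,\Gamma(m+d)}\,\sigma^m(1+\sigma)^{-2m-d}=(1-\sigma)^{-1}$ with $\sigma=s/a$, which is exactly the normalization check the paper performs by a random-walk argument.

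The genuine gap is Step 3. The claim that the mixture structure is ``preserved at $\tau_a$'' and that $\varphi_x$ is ``forced'' to be $\sum_k q_m\binom{m}{k}\prod_i(x_i/s)^{k_i}\,\mathrm{Dir}(y;k+\delta)$ does not follow from the fixed-time computation plus the strong Markov property. The latent variable $K$ exists only in the disintegration of the law of $Z(t)$ at a \emph{deterministic} time, and what Step 2 gives is independence of $\mu(t)$ from the single random variable $S(t)$; the event $\{\tau_a\in dt\}$, by contrast, is measurable with respect to the whole radial path up to $t$, so conditioning on it can in principle distort the conditional law of $\mu(t)$ unless you first prove a process-level independence of the weight process from the radial process. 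That independence is exactly the skew-product decomposition the paper establishes separately (Proposition \ref{bestimechange}, via Knight's theorem), and even granting it you run into a circularity: the law of $\mu(\varsigma_a)=\nu(4C_{\varsigma_a})$ at an independent random time requires the Wright--Fisher transition density, which the paper \emph{derives from} Proposition \ref{vsmmarket} (Proposition \ref{tranmarket}), not the other way around. Relatedly, Step 4 is not a well-defined computation as stated: there is no canonical ``$|K(\tau_a)|$'' whose law you can extract from the joint density of $(|K(t)|,S(t))$ together with the first-passage transform of the radial BESQ$^{2d}$, so the identification of $q_m$ is precisely the missing work. The paper closes this hole by a different route entirely: it builds a Kelvin-type transform for the BESQ generator (Proposition \ref{kelvintrans}), computes the Green function of the simplex as $v=u_y-K[u_y]$ (Proposition \ref{propgreen}), and obtains the exit density as the conormal derivative of $v$ on $\bndsimp$ via Green's second identity, then verifies it rigorously through harmonicity and a moment/completeness argument. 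To salvage your plan you would either have to reproduce something equivalent to that boundary-derivative computation, or prove directly (without circularity) that the hitting law of the angular process has the asserted mixed-Dirichlet form with multinomial split -- neither of which your outline currently supplies.
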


As mentioned in the abstract, the analysis requires us to compute the
exit density of a collection of independent BESQ processes, of
dimensions $\delta_1, \ldots, \delta_n$, which might be of
independent interest.

\begin{prop}\label{exitderiv}
Suppose $n \ge3$ and let $Z=(Z_1, \ldots, Z_n)$ be independent BESQ
processes of respective dimensions $\theta_1, \ldots, \theta_n$, where
\[
\theta_i > 0  \qquad \mbox{for all $i$}   \quad \mbox{and} \quad  \theta_0=\sum
_{i=1}^n \theta_i > 2.
\]
Assume that, initially, $Z(0)=z=(z_1, \ldots, z_n)$, where each
$z_i\ge0$ and $S_z:=\sum_{i=1}^n z_i < 1$.
Consider the stopping time $\sigma_1$ given by
\[
\sigma_1= \inf\{ t\dvtx  \zeta(t) \ge1 \}, \qquad  \zeta
(t)=Z_1(t)+ \cdots+Z_n(t).
\]
The density of $(Z_1, Z_2, \ldots, Z_n)(\sigma_1)$ is then given by
%
\begin{eqnarray}\label{exitdensity}
\varphi_z(y) &=& ( 1 - S_z ) \sum_{m=0}^{\infty} \frac
{\Gamma(2m + \theta_0/2)}{m!\Gamma(m + \theta_0/2)} (1+ S_z
)^{-2m-\theta_0/2} \nonumber
\\
&&\hphantom{( 1 - S_z ) \sum_{m=0}^{\infty}}
{}\times\sum_{k\ge0: k_1 + \cdots+ k_n=m} \combi{m}{k_1\cdots
k_n} \prod_{i=1}^{n} z_i^{k_i} \operatorname{Dir}(y; k+\theta/2), \\
 \eqntext{y_i \ge0  \mbox{ for all $i$ and } \sum_{i=1}^n y_i =1.}
\end{eqnarray}
Here, $k+\theta/2$ denotes the vector $(k_1+\theta_1/2, \ldots,
k_n+\theta_n/2)$.

Since each $\theta_i$ is assumed to be strictly positive, the above
expression is also the exit density of the $Z$ process from the unit
simplex $\{ x\in\mathbb{R}^n\dvtx\break  x_i \ge0, \sum_{i=1}^n x_i \le1 \}$.
\end{prop}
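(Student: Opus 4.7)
I would combine a Gamma--Dirichlet mixture representation of the joint fixed-time density with a Kelvin-type (time-inversion) argument for the sum process $\zeta=Z_1+\cdots+Z_n$ that converts the first-passage problem into a fixed-time calculation.

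\emph{Fixed-time mixture.} Expand each BESQ$^{\theta_i}$ transition density using the power-series representation of the modified Bessel function $I_{\theta_i/2-1}$ in its explicit kernel. This yields the classical Poisson--Gamma representation $q^{\theta_i}_t(z_i,y_i)=\sum_{k_i}\mathrm{Poisson}(z_i/(2t))(k_i)\,\gamma_{k_i+\theta_i/2,\,2t}(y_i)$. By independence, pooling the Poissons through $m=k_1+\cdots+k_n$, and writing $P_m(k;z)=\binom{m}{k_1\cdots k_n}\prod_i(z_i/S_z)^{k_i}$, the joint density of $(Z_1(t),\ldots,Z_n(t))$ becomes a Poisson-then-multinomial mixture of products of independent Gammas with common scale $2t$. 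Passing to simplex coordinates $y_i=\zeta\mu_i$ and invoking the standard Gamma--Dirichlet identity (independent Gammas with a common scale split as their Gamma-distributed sum times an independent Dirichlet vector of proportions), each mixture term factors as
\[
\gamma_{m+\theta_0/2,\,2t}(\zeta)\cdot\mathrm{Dir}(\mu;\,k+\theta/2).
\]
Thus, conditional on the latent pair $(m,k)$ at fixed $t$, the radial part $\zeta(t)$ is Gamma and the angular part $\mu(t)$ is Dirichlet, \emph{independently}.

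\emph{From fixed time to the exit time.} The sum $\zeta$ is itself a BESQ$^{\theta_0}$ process, and $\sigma_1$ is its first-passage time to $1$ from $S_z<1$. I would apply the Kelvin/time-inversion transformation for BESQ$^{\theta_0}$, which recasts this upward first-passage as the evaluation of a dual (inverted) BESQ process, so that the fixed-time mixture can be transferred term-by-term to time $\sigma_1$. The weight on the latent index $m$ is then extracted via the Laplace-type identity
\[
\int_0^{\infty}(2t)^{-2m-\theta_0/2}\,e^{-(1+S_z)/(2t)}\,dt \;=\; \tfrac{1}{2}(1+S_z)^{1-2m-\theta_0/2}\,\Gamma\!\bigl(2m+\theta_0/2-1\bigr),
\]
together with the boundary-flux normalization on $\{\sum y_i=1\}$ that contributes the prefactor $(1-S_z)$ and raises $\Gamma(2m+\theta_0/2-1)$ to $\Gamma(2m+\theta_0/2)$. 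Assembling these with the Dirichlet--multinomial structure of the first step yields \eqref{exitdensity}. As a consistency check one verifies $\int\varphi_z\,d\sigma=1$ via the quadratic generating-function identity
\[
\sum_{m\ge 0}\frac{\Gamma(2m+a)}{m!\,\Gamma(m+a)}\,x^m \;=\; \frac{1}{\sqrt{1-4x}}\Bigl(\frac{1-\sqrt{1-4x}}{2x}\Bigr)^{a-1}
\]
evaluated at $a=\theta_0/2$ and $x=S_z/(1+S_z)^2$, where $\sqrt{1-4x}=(1-S_z)/(1+S_z)$ collapses the sum to $(1+S_z)^{\theta_0/2}/(1-S_z)$.

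\emph{Principal obstacle.} The subtle point is the passage from fixed time to $\sigma_1$. The Gamma--Dirichlet decoupling holds only at each fixed $t$: the law of $\mu(\sigma_1)$ actually depends on the \emph{entire} trajectory of $\zeta$ through the intrinsic Wright--Fisher clock $\int_0^{\sigma_1}du/\zeta(u)$, not merely through $\sigma_1$ itself, so one cannot naively integrate the conditional fixed-time Dirichlet against the first-passage density of $\zeta$. The Kelvin inversion is the device that bypasses this: it converts the first-passage problem into a fixed-time problem for a dual BESQ in which the mixture representation applies directly, and the matching of normalizations is precisely what produces the specific coefficient $(1-S_z)(1+S_z)^{-2m-\theta_0/2}\Gamma(2m+\theta_0/2)/[m!\,\Gamma(m+\theta_0/2)]$ in the statement. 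Making this reduction rigorous and pinning down the flux normalization is the technical core of the argument.
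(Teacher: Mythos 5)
Your building blocks are the right ones, and two of them match the paper exactly: the Poisson--Gamma expansion of the BESQ kernels followed by the $t$-integral $\int_0^\infty (2t)^{-2m-\theta_0/2}e^{-S/(2t)}dt=\tfrac12\Gamma(2m+\theta_0/2-1)S^{1-2m-\theta_0/2}$ is precisely how the paper computes the potential kernel $u(z,y)=\int_0^\infty\prod_i p_t^{\theta_i}(z_i,y_i)\,dt$, and your Catalan-type generating-function check of $\int\varphi_z=1$ is correct (and arguably cleaner than the paper's random-walk verification, which assumes $\theta_0/2$ integral). But the step you yourself flag as the technical core is exactly the step that is missing, and the mechanism you propose for it does not work as described. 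A ``Kelvin/time-inversion transformation for the sum process $\zeta$'' cannot transfer the fixed-time Gamma--Dirichlet decoupling to $\sigma_1$: as you note, the angular law at $\sigma_1$ depends on the whole trajectory of $\zeta$ through the clock $\int_0^{\sigma_1}ds/\zeta(s)$, so no identity for the one-dimensional process $\zeta$ alone can determine it. Moreover, your ``boundary-flux normalization'' is asked to supply the factor $(2m+\theta_0/2-1)$ term by term (turning $\Gamma(2m+\theta_0/2-1)$ into $\Gamma(2m+\theta_0/2)$); a normalization is a single scalar depending on $z$, and indeed the boundary restriction of the potential kernel is \emph{not} proportional to the exit density --- the ratio of their $m$-th coefficients is $m$-dependent --- so no renormalization of your fixed-time/potential expression can produce \eqref{exitdensity}.

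What actually closes the gap in the paper is a genuinely $n$-dimensional construction. One defines the inversion $I(z)=z/(\sum_iz_i)^2$ of $\rr^{n+}$ with respect to $\usimp$ and the transform $K[u](z)=(\sum_iz_i)^{1-\theta_0/2}u(I(z))$, proves that $K$ preserves $\mcal{L}$-harmonicity for the full BESQ generator $\mcal{L}=\sum_i\theta_i\partial_i+2\sum_iz_i\partial_i^2$ (Proposition \ref{kelvintrans}), and uses it as a method of images: $v(z,y)=u(z,y)-K[u_y](z)$ is shown, by optional stopping, to be the Green function of $Z$ killed at $\bndsimp$. The exit density is then the weighted conormal derivative $\varphi_z(y)=-2\,\omega(y)\omega(z)^{-1}\sum_iy_i\partial_{y_i}v(y,z)$, obtained from a Green's second identity adapted to $\mcal{L}$ with weight $\omega(x)=\prod_ix_i^{\theta_i/2-1}$; it is the differentiation of the image difference at $\{\sum_iy_i=1\}$ that produces both the prefactor $(1-S_z)$ and the shift to $\Gamma(2m+\theta_0/2)$, and a separate martingale/moment argument is still needed to justify the boundary manipulations rigorously. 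So your outline contains the correct series and the correct constants, but the passage from first-passage law to that series --- the multidimensional Kelvin transform, the image construction of the Green function, and the boundary-derivative identity --- is absent, and without it the argument does not go through.
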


A deeper analysis can be undertaken by noting, as we will show in
Section~\ref{polar}, that the distribution of market weights under the
VSM model is nothing but the multi-allele Wright--Fisher diffusion
model studied in population genetics. A short introduction to this
well-known and important model is provided in Section~\ref{models}.

\begin{prop}\label{vsmproof}
The process of market weights $(\mu_1, \ldots, \mu_n)$ under
$V(\delta_1, \break \ldots, \delta_n)$ is itself a diffusion, independent
of the total sum process $S$. Its law is the same as that of a
multi-allele Wright--Fisher model with mutation parameters $(\delta_1,
\ldots, \delta_n)$.\vadjust{\goodbreak}

Under the additional assumption that each $\delta_i$ is strictly
positive, the unique reversible invariant probability law for the
market weights under $V(\delta_1, \ldots, \delta_n)$ is given by the
multivariate Dirichlet distribution with parameters $(\delta_1, \ldots
, \delta_n)$.
\end{prop}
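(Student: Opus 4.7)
The plan is to exploit the BESQ time-change representation \eqref{vsmbes}. Write $X_i(t)=Z_i(\Lambda(t))$, where $Z_1,\ldots,Z_n$ are independent BESQ processes of dimensions $(2\delta_1,\ldots,2\delta_n)$ and $\Lambda(t)=\tfrac{1}{4}\int_0^t S(u)\,du$. Setting $\zeta(s)=Z_1(s)+\cdots+Z_n(s)$ and $\pi_i(s)=Z_i(s)/\zeta(s)$, one has $\mu_i(t)=\pi_i(\Lambda(t))$. The first key step is the classical skew-product decomposition for a sum of independent BESQ processes (going back to Shiga--Watanabe and Warren--Yor): the ratio vector $\pi$, when time-changed by the inverse of $C(s):=4\int_0^s du/\zeta(u)$, coincides with a multi-allele Wright--Fisher diffusion $W=(W_1,\ldots,W_n)$ with mutation parameters $(\delta_1,\ldots,\delta_n)$, and this $W$ is independent of the one-dimensional process $\zeta$. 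I would verify this by applying It\^o to $\pi_i=Z_i/\zeta$: the drift is $\tfrac{2}{\zeta}(\delta_i-d\pi_i)\,ds$ (with $d=\sum_i\delta_i$), the covariation matrix is $\tfrac{4}{\zeta}(\pi_i\delta_{ij}-\pi_i\pi_j)\,ds$, and the martingale part of $\pi$ is orthogonal to $d\zeta$. Under the time change $s\mapsto C(s)$ these become the standard Wright--Fisher coefficients, and the orthogonality yields the independence from $\zeta$.

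Composing the two time changes gives $\mu_i(t)=W_i\bigl(C(\Lambda(t))\bigr)$. Using $\Lambda'(t)=\tfrac{1}{4}\zeta(\Lambda(t))$ and substituting $u=\Lambda(v)$,
\[
C(\Lambda(t))=4\int_0^{\Lambda(t)}\frac{du}{\zeta(u)}=\int_0^t\frac{\zeta(\Lambda(v))}{\zeta(\Lambda(v))}\,dv=t,
\]
so $\mu(t)=W(t)$ is exactly a Wright--Fisher diffusion with mutation parameters $(\delta_1,\ldots,\delta_n)$. Since $S(t)=\zeta(\Lambda(t))$ and $\Lambda$ is a pathwise functional of $\zeta$ (via the ODE $\Lambda'=\zeta(\Lambda)/4$), $S$ is $\sigma(\zeta)$-measurable, and the independence of $W$ from $\zeta$ then yields the independence of $\mu$ from $S$ asserted in the proposition.

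For the reversible invariant measure, once $\mu$ has been identified with the Wright--Fisher diffusion, the assertion is classical: when all $\delta_i>0$ the Dirichlet$(\delta_1,\ldots,\delta_n)$ density satisfies detailed balance with the Wright--Fisher generator on the open simplex and is therefore its unique reversible invariant probability. I would verify this directly either by checking symmetry of the generator in $L^2(\mathrm{Dir})$, or equivalently by computing $\mathcal{L}^{*}\rho=0$ for $\rho(y)\propto\prod_i y_i^{\delta_i-1}$.

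The technical heart of the argument is the skew-product/independence step: while the It\^o computation closing the SDE of $\pi$ in $\pi$ alone (after the $C$-time change) is routine, the independence of the time-changed $\pi$ from $\zeta$ rests on the Warren--Yor construction of the Wright--Fisher diffusion out of independent BESQs, or equivalently a Dambis--Dubins--Schwarz argument exploiting the orthogonality between the martingale driving $\pi$ and that driving $\zeta$. A more hands-on alternative would be to derive the SDE for $\mu$ directly from \eqref{vsmeq2} via It\^o and observe that its drift and diffusion coefficients depend only on $\mu$, but the independence of $\mu$ and $S$ would still demand an additional filtration/orthogonality argument, whereas the BESQ route delivers it for free.
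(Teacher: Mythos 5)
Your proposal is correct and follows essentially the same route as the paper: the skew-product decomposition of the independent BESQ vector (ratio process time-changed by the inverse of $4\int_0^\cdot ds/\zeta(s)$ is a Wright--Fisher diffusion independent of $\zeta$, proved via It\^o plus orthogonality/Knight--DDS), composed with the VSM time change $\Lambda$, whose cancellation $C(\Lambda(t))=t$ identifies $\mu$ with the Wright--Fisher process and gives independence from $S=\zeta\circ\Lambda$. The invariant-measure statement is likewise handled as in the paper by appeal to the classical Dirichlet reversibility result for Wright--Fisher diffusions.
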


Finally, we prove a transition density formula for the market weights.
Since we show that the market weights have the same law as the
Wright--Fisher diffusions, it follows that this is the same as the
transition density for the Wright--Fisher model which was originally
derived by Griffiths in 1979~\cite{griffiths79b}; see also Griffiths
\cite{griffiths79a}. Our proof is novel and follows easily from
Proposition~\ref{vsmmarket} and suitably changing time.

\begin{prop}\label{tranmarket}
Let $p(t,\xi,y)$ denote the transition density from an initial point
$\xi$ to a final point $y$ of the market weights under the VSM model
which satisfies the same assumptions as in Proposition \ref
{vsmmarket}. Then, $p(t,\xi,y)$ is given by the formula
%
\begin{eqnarray}\label{ptxiy}
p(t,\xi,y)&=& \sum_{m=0}^{\infty} \frac{\Gamma(2m + d)}{m!\Gamma(m
+ d)} b_m(t) \nonumber
\\
&&\hphantom{\sum_{m=0}^{\infty}}
{}\times \sum_{ k\ge0: k_1 + \cdots+ k_n=m} \combi{m}{k_1\cdots
k_n} \prod_{i=1}^{n} (\xi_i)^{k_i} \operatorname{Dir}(y; k+\delta
),\\
\eqntext{\xi_i \ge0,\ y_i \ge0,\ \sum_{i=1}^n \xi_i = \sum
_{i=1}^n y_i =1.}
\end{eqnarray}
The coefficients $b_m(\cdot)$ can be expressed by the Laplace
transform formula which holds for all positive $\rho$:
%
\begin{eqnarray}\label{Laplaceforbm}
&& \quad \int_0^{\infty} b_m(t) t^{-3/2} e^{-\gamma^2 t/2} \exp\biggl(
-\frac{\rho^2}{2t} \biggr)\,dt\nonumber
\\[-8pt]
\\[-8pt]
&& \quad  \qquad =\sqrt{2\pi}\rho^{-1}e^{-(m+\gamma)\rho}  ( 1 - e^{-\rho}
) (1+ e^{-\rho} )^{-2m-d}, \qquad  m=0,1,2, \ldots.
\nonumber
\end{eqnarray}
Here, $\gamma=(d-1)/2$.
\end{prop}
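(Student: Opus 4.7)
The plan is to derive \eqref{ptxiy} from Proposition \ref{vsmmarket} by exploiting the independence of the market weight process $\mu$ from the total sum process $S$ established in Proposition \ref{vsmproof}. First I would identify the law of $S$: summing the VSM SDE yields $dS = (d/2)\,S\,dt + S\,dB$ for a one-dimensional Brownian motion $B$, hence $S(t) = s\exp(B(t) + \gamma t)$ with $\gamma = (d-1)/2$ and $s := \sum_i x_i$. Consequently $\varsigma_a$ is the first passage time of the drifted Brownian motion $B(\cdot) + \gamma\,\cdot$ to the level $\rho := \log(a/s) > 0$, with the classical density
\[ f_{\varsigma_a}(t) = \frac{\rho}{\sqrt{2\pi t^3}}\exp\!\Bigl(\gamma\rho - \frac{\gamma^2 t}{2} - \frac{\rho^2}{2t}\Bigr). \]

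Setting $\xi_i := x_i/s$, so that $\xi$ is the initial market-weight vector, independence of $\mu$ from $S$ gives the key integral identity
\[ \varphi_x(y) = \int_0^\infty p(t,\xi,y)\,f_{\varsigma_a}(t)\,dt. \]
Substituting $s/a = e^{-\rho}$ and $x_i/a = \xi_i e^{-\rho}$ into \eqref{mwtden} rewrites its right-hand side as $\sum_{m\ge 0} \frac{\Gamma(2m+d)}{m!\,\Gamma(m+d)}\,C_m(\rho)\,\Psi_m(\xi,y)$, where
\[ \Psi_m(\xi,y) := \sum_{|k|=m}\combi{m}{k_1\cdots k_n}\prod_{i=1}^n\xi_i^{k_i}\,\text{Dir}(y;k+\delta),\qquad C_m(\rho) := (1-e^{-\rho})(1+e^{-\rho})^{-2m-d}e^{-m\rho}. \]
I would then posit the matching expansion $p(t,\xi,y) = \sum_m \frac{\Gamma(2m+d)}{m!\,\Gamma(m+d)}\,b_m(t)\,\Psi_m(\xi,y)$ and, using that the $\Psi_m$ are linearly independent across $m$ (the monomial $\prod\xi_i^{k_i}\prod y_i^{k_i+\delta_i-1}$ identifies the multi-index $k$ uniquely), extract the scalar identity $\int_0^\infty b_m(t)\,f_{\varsigma_a}(t)\,dt = C_m(\rho)$. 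Inserting the explicit $f_{\varsigma_a}$ and cancelling the $\rho$-independent prefactor $\rho(2\pi)^{-1/2}e^{\gamma\rho}$ produces exactly \eqref{Laplaceforbm}.

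The main obstacle is closing the loop on inversion: justifying that \eqref{Laplaceforbm} determines $b_m$ uniquely and that the resulting series is genuinely the transition density. For uniqueness, the change of variables $u=1/t$ recasts the left-hand side of \eqref{Laplaceforbm} as the standard Laplace transform in $u$, with parameter $\rho^2/2$, of $b_m(1/u)\,u^{-1/2}\,e^{-\gamma^2/(2u)}$; as $\rho$ sweeps out $(0,\infty)$, classical Laplace uniqueness pins $b_m$ down. With $b_m$ so defined, one verifies that the right-hand side of \eqref{ptxiy} is a nonnegative function of $y$ integrating to one and that it satisfies the integral identity above for every $\rho>0$ and every initial $\xi$. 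The Markov property of $\mu$ together with the injectivity of the one-parameter family of transforms $\rho\mapsto\int_0^\infty\,\cdot\,f_{\varsigma_a}(t)\,dt$ then identifies this series as $p(t,\xi,y)$. A less self-contained alternative would invoke Wright--Fisher spectral theory, which gives a priori that the transition density of $\mu$ admits an eigenfunction expansion whose $m$-th eigenspace is spanned precisely by the $\Psi_m$.
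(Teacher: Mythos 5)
Your proposal is correct and follows essentially the same route as the paper: identify $S$ as a geometric Brownian motion so that $\varsigma_a$ has the classical first-passage density, use the independence of $\mu$ and $S$ to write $\varphi_x(y)=\int_0^\infty p(t,\xi,y)\,P(\varsigma_a\in dt)$, recognize this transform as a Laplace transform after the substitution $u=1/t$, and invert it term by term in the series from Proposition \ref{vsmmarket} to obtain \eqref{Laplaceforbm}. The only cosmetic difference is that the paper inverts the transform directly inside the sum to read off $b_m$, rather than positing the expansion of $p(t,\xi,y)$ and extracting coefficients by linear independence as you do.
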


\begin{remarks*}
(i) Tavar\'e~\cite{tavare84} gives a different proof of the
above formula for the Wright--Fisher model, where the coefficients
$b_m(t)$ are themselves linked to transition probabilities of a pure
death process in $\mathbb{Z}^+\cup\{\infty\}$. Our formula above
establishes a Laplace transform representation of the same
probabilities, which might be of some interest.

(ii) The transition density function for the Wright--Fisher
model, as derived by Griffiths, has exactly the same form for all
nonnegative values of $(\delta_1, \ldots, \delta_n)$. It should be
possible, by extending our methods, to eliminate assumptions on the
parameters. However, it is not immediate and requires further work. We
do not pursue this here since the VSM models naturally assume that
$\sum_{i=1}^n \delta_i > 1$, which corresponds to the fact that the
entire equity market never hits zero.\vadjust{\goodbreak}

(iii) There is an interest in determining whether the market
weights in equilibrium exhibit power-law decay (i.e., the $i$th largest
market weight $\mu_i$ is proportional to $i^{-\gamma}$ for some
positive $\gamma$). This is empirically observed and can be proven in
the case of certain models; see Chatterjee and Pal~\cite{chatpal} for
further motivation, references and some results involving the
Poisson--Dirichlet families of point processes with parameters $(\alpha
,0)$ where this indeed takes place. However, there does not appear to
be such a possibility for the VSM models. The finite-dimensional
invariant distributions have been identified in Proposition \ref
{vsmproof} as Dirichlet distributions. Under standard  Poisson
convergence  assumptions, the point processes of the order statistics
of Dirichlet distributions converge to Poisson--Dirichlet processes
with parameters $(0,\beta)$ for some positive $\beta$, which do not
exhibit power-law decay.
\end{remarks*}

The paper is arranged as follows. The next subsection describes the
multi-allele Wright--Fisher models and their limiting measure-valued
diffusion, the Fleming--Viot model. In Section~\ref{mktstop} we
provide proofs of Propositions~\ref{vsmmarket} and~\ref{exitderiv}.
This is achieved by defining a multidimensional functional
transformation, akin to the Kelvin transform for the Laplacian, that
utilizes inversion with respect to the unit simplex. In Section \ref
{polar} we establish the fact that the process of market weights under
the VSM model is actually the Wright--Fisher model. The analysis is
slightly generalized to include the Fleming--Viot models, which shows
the large $n$ behavior of the market weights. In Section \ref
{section:submarket} we examine the practical situation where one
considers not the entire vector of market weights, but only a subset of
it. This situation can be handled due to a recursive property of VSM models. Finally, in Section~\ref
{trandenmarket} we establish Proposition~\ref{tranmarket} as a
corollary of the previous results.

\subsection{Notation}\label{notations} This article sometimes
requires notation that refers to similar, and yet different, objects.
To help the reader avoid confusion, we now list most of the notation
used repeatedly in the following sections.

VSM processes will be denoted throughout by $X=(X_1, \ldots, X_n),$
while BESQ processes will be written $Z=(Z_1, \ldots, Z_n)$. Their
dimensions will be the vectors $\delta$ and $\theta$, respectively.
The sum processes will be $S=\sum_{i=1}^n X_i$ and $\zeta=\sum
_{i=1}^n Z_i$, with corresponding dimensions
%
\begin{equation}\label{whatistheta0}
d=\sum_{i=1}^n \delta_i  \quad \mbox{and} \quad  \theta_0=\sum
_{i=1}^n \theta_i.
\end{equation}
The stopping times $\varsigma_a$ and $\sigma_a$ denote the random
hitting times of level $a$ by the processes $S$ and $\zeta$,
respectively. It will sometimes be convenient to consider the following
transformation of the parameter $\theta$:
%
\begin{equation}\label{whatisnu}
\nu_i := \theta_i/ 2 -1, \qquad  \nu_0= \sum_{i=1}^n \nu_i.\vadjust{\goodbreak}
\end{equation}

The closed positive quadrant in $n$ dimensions will be denoted by
$\mathbb{R}
^{n+}$. We denote the $n$-dimensional closed unit simplex by
%
\begin{equation}\label{whatisusimp}\qquad
\usimp= \Biggl\{ x=(x_1, \ldots, x_n)\dvtx  x_i \ge0 \mbox{ for
all } i=1,\ldots,n, \mbox{ and } \sum_{i=1}^n x_i \le1 \Biggr\}.
\end{equation}
The oblique boundary of the unit simplex will be denoted by
%
\begin{equation}\label{whatisbndsimp}\qquad
\bndsimp= \Biggl\{ x=(x_1, \ldots, x_n)\dvtx  x_i \ge0 \mbox{ for
all } i=1,\ldots,n, \mbox{ and } \sum_{i=1}^n x_i = 1 \Biggr\}.
\end{equation}

For any two vectors $a=(a_1, \ldots, a_n)$ and $b=(b_1, \ldots,
b_n),$ we will use the following notation:
%
\begin{equation}\label{spnotn}
S_a= \sum_{i=1}^n a_i, \qquad  a^b= \prod_{i=1}^n a_i^{b_i}, \qquad  a!=
\prod_{i=1}^n a_i !.
\end{equation}

\subsection{A brief description of various models}\label{models} In
this subsection we describe the various stochastic processes which are
all linked to VSM models.

\subsubsection{Bessel-square processes}
A comprehensive treatment of BESQ pro\-cesses can be found in the book by
Revuz and Yor~\cite{yorbook}. These one-dimensional diffusions are
indexed by a single nonnegative real parameter $\theta$ (called the
dimension) and are solutions of the stochastic differential equations
%
\begin{equation}\label{besqintro}
Z(t)= x + 2 \int_0^t \sqrt{| Z(s) |}\,d\beta(s) + \theta t, \qquad
x \ge0,\ t\ge0,
\end{equation}
where $\beta$ is a one-dimensional standard Brownian motion. We denote
the law of this process by BESQ$^\theta_x$. It can be shown that the
above SDE admits a unique strong solution which remains nonnegative
throughout time.

For $\theta=1,2,3,4,\ldots$ however, the same process law can be
obtained from another perspective. It is well known that in dimension
$\theta=1,2,3,4,\ldots$ the BESQ process has the same law as that of
the square of the Euclidean norm of Brownian motion in dimension
$\theta$. The case $\theta=0$ is unique. The BESQ process for
\textit{dimension} zero is a nonnegative martingale which is a diffusion
approximation to the process of the size of the surviving population of
a critical Galton--Watson branching process.

The applications of BESQ processes, and especially of derived Bessel
processes, are too numerous to list here. As a tip of this iceberg, we
mention such diverse areas as: (i) branching process theory and
superprocesses (see Etheridge~\cite{etheridge}); (ii) Brownian path
decomposition and excursion theory (see the book by Revuz and Yor
\cite{yorbook}, Chapter XII); (iii) L\'evy processes (see the article
by Carmona, Petit and Yor~\cite{carpetiyor}); (iv) local times of
Markov processes and Dynkin's isomorphism (see Eisenbaum \cite
{eisenbaumrk}, Pitman~\cite{pitmancb} and Werner \cite
{wernerperturb}); (v) mathematical finance (see Cox, Ingersoll and Ross
\cite{cirproc}, Geman and Yor~\cite{gemanyor}); (vi) random matrices
(see Bru~\cite{bruwishart} and K\"onig and O'Connell~\cite{konigoconnell}).

\subsubsection{Wright--Fisher diffusions}
The Wright--Fisher
diffusion model (see, e.g., Ethier and Kurtz~\cite{ethkur81}, page~432) arises as the diffusion approximation of the
Wright--Fisher Markov chain model as the population size goes to
infinity. A~good source for an introduction to the biology and
mathematics of these models is Chapter 1 in the book by Durrett \cite
{durrettgenetics}.

For the purposes of this article, it is a family of diffusions with
state space $\bndsimp$ and parametrized by a vector $(\delta_1,
\ldots, \delta_n)$ of nonnegative entries. These are the solutions of
the stochastic differential equations
\begin{equation}\label{whatisjacobi}
dJ(t) = \frac{1}{2} \bigl(\delta_i - d J(t) \bigr) \,dt + \tilde{\sigma}(J)\,d\beta(t), \qquad  d = \sum_{i=1}^n \delta_i.
\end{equation}
Here, $\beta$ is a standard multidimensional Brownian motion and the
diffusion matrix $\tilde{\sigma}$ is given by
%
\begin{equation}\label{whatistsigma}
\tilde{\sigma}_{i,j}(x)= \sqrt{x_i} \bigl(1\{i=j\} - \sqrt{x_ix_j}
\bigr), \qquad  1\le i,j\le n.
\end{equation}
The law of this process will be denoted by $J(\delta_1, \ldots,
\delta_n)$.

In the literature this process is sometimes identified by its Markov generator:
%
\begin{equation}\label{genwf}
\mathcal{A}_n = \frac{1}{2}\sum_{i,j=1}^n x_i ( 1\{i=j\} -
x_j ) \frac{\partial^2}{\partial x_i\, \partial x_j} + \sum
_{i=1}^n \frac{1}{2} (\delta_i - d x_i )\frac{\partial
}{\partial x_i}.
\end{equation}

For the case of $n=2$, the first coordinate of the Wright--Fisher
diffusion is also known as the Jacobi diffusion; see the article by
Warren and Yor~\cite{warrenyor}. Hence, the general class is sometimes
also referred to as that of multidimensional Jacobi diffusions; see,
for example, Goia~\cite{goiathesis09}.

It is known that for any $n\in\mathbb{N}$ and any strictly positive
$\delta_1, \ldots, \delta_n$, the Dirichlet distribution $\operatorname{Dir}(\delta
_1,\ldots,\delta_n)$ is the unique reversible invariant measure for
the Wright--Fisher model $J(\delta_1, \ldots, \delta_n)$; see Lemma
4.1 of~\cite{ethkur81}.

\subsubsection{Fleming--Viot diffusions}\label{section1.2.3} The large $n$ limit
of Wright--Fisher diffusions is the family of measure-valued diffusions
that are known as Fleming--Viot processes; see the survey by Ethier and
Kurtz~\cite{FVsurvey}. These diffusions take values from the set of
all probability measures on an underlying space and can be parametrized
by a linear operator. Fleming--Viot processes and Dawson--Watanabe
superprocesses are probably the most important families of
measure-valued diffusions studied in probability. For an introduction
to the rich literature in this area, see the book by Etheridge \cite
{etheridge}.

We will hardly need the general theory in this article. In fact, the
family of Fleming--Viot processes we will use has no spatial component.
Let $B$ be any Lebesgue-measurable subset of $[0,\infty)$ whose
Lebesgue measure is $\theta_0$ for some $\theta_0 \ge0$. By a
Fleming--Viot process, we refer to a stochastic process which, at any
time, takes value in the metric space of $\mathcal{P}(B)$, the set of all
probability measures supported on $B$ under the Prokhorov metric of
weak convergence. This process, say $\nu$, is defined by the following
property: for any $n=1,2,\ldots$ and any partition of $B$ into
disjoint Lebesgue-measurable sets $ A_1, \ldots, A_n$ with respective
Lebesgue measures $\theta_1, \ldots, \theta_n$, where $\theta_i \ge
0$ and $\theta_0=\sum_{i=1}^n \theta_i$, the law of the derived process
\[
( \nu(A_1), \ldots, \nu(A_n) )(t), \qquad  0\le t <
\infty,
\]
is distributed as $J(\theta_1/2, \ldots, \theta_n/2)$. We will
denote the law of the process $\nu$ by $\operatorname{FV}(B)$. We will construct such
a process later in the text, which will prove its existence. That it is
uniquely defined by the above specification is clear.

\section{Description of market weights under VSM models}\label{mktstop}

Consider $n$ nonnegative parameters $(\theta_1, \ldots, \theta_n)$
and $n$ independent BESQ processes $(Z_1, \ldots,\break Z_n),$ where the
dimension of $Z_i$ is $\theta_i$ and the assumptions of Proposition
\ref{exitderiv} are satisfied. Then, as we have noted in \eqref
{vsmbes}, one can construct a process with law $V(\theta_1/2,\ldots
,\theta_n/2)$ by an appropriate time change of the BESQ processes. We
extend the notation introduced in~\eqref{vsmbes}. Recall the sum
processes $\zeta= \sum_{i=1}^n Z_i$ and $S =\sum_{i=1}^n X_i$.

The market weights at any time $t$ are then given by the relation
\[
\mu_i(t) = \frac{X_i(t)}{S(t)} = \frac{Z_i}{\zeta} ( \Lambda
(t) ), \qquad  \Lambda(t)= \frac{1}{4}\int_0^t S(u) \,du.
\]
Our first step is to eliminate the time change by studying the process
at a random stopping time $\varsigma_a$ when the process $S$ hits a
level $a$.

Consider the corresponding hitting time $\sigma_a$ for the process
$\zeta$. It then plainly follows from the time change relationship
$S(t)=\zeta(\Lambda(t))$ that $\Lambda(\varsigma_a)=\sigma_a$ and
%
\begin{equation}\label{ridoftime}
\mu(\varsigma_a) = \frac{Z}{\zeta} ( \Lambda(\varsigma_a)
) = \frac{Z}{\zeta} ( \sigma_a )=\frac{1}{a}
Z ( \sigma_a ).
\end{equation}
On the right-hand side above, we have the process $Z$ the first time it
escapes from the set $a\usimp$. Since each $\theta_i$ is positive,
the BESQ process can only exit $\usimp$ through the oblique boundary
$\bndsimp$ (all the other boundaries are reflecting). Our objective is
to compute this exit density, which, in turn, gives the exit density of
the market weights at $\varsigma_a$.

Before proceeding to computations, we remark that it is enough to take
$a=1$. This is because of the following scaling property of BESQ
processes. Let $Y$ be a BESQ$^{\delta}_x$ process. Then, for any
positive $a$, the scaled process $ \{ {a^{-1}} Y(at), t \ge
0 \}$ is a BESQ$^\delta$\vadjust{\goodbreak} process starting from $x/a$. In
particular, by scaling each of $Z_1, \ldots, Z_n$ by $a$, we get that
the law of the vector $a^{-1}Z(\sigma_a)$ is the same as the vector
$Z(\sigma_1)$ when the initial vector of values has been divided by $a$.

The other consideration is whether or not $\sigma_a$ is finite. The
sum $\zeta$ is a BESQ process of dimension $\theta_0$. This process
is transient if and only if $\theta_0 > 2$. Thus, under the
assumptions in Proposition~\ref{exitderiv}, the finiteness of $\sigma
_1$ holds with probability one.

\subsection{Green kernel and the exit density of BESQ processes}

Our main tool is the definition of a functional transformation
analogous to the classical \textit{Kelvin transform}. The intuition
comes from the fact that when the dimensions of BESQ processes are
positive integers, they have the same law as that of the Euclidean
norm-square of multidimensional Brownian motion. Thus, the exit density
from the unit simplex for BESQ processes can, in principle, be derived
from the Poisson kernel expansion for the exit density of the Brownian
motion from the unit ball. One way to obtain the Poisson kernel formula
is by employing classical Kelvin transform techniques (see the book on
harmonic function theory~\cite{harmonicfn}, Chapter 4). We generalize
that concept below.

Consider the (generalized) Markovian generator of the process $(Z_1,
Z_2, \ldots,\break Z_n)$ acting on $C^2 (\mathbb{R}^{n+} )$, the space
of functions that are twice continuously differentiable in $\mathbb{R}^{n+}$
up to the boundary. It is the following differential operator:
%
\begin{equation}\label{besqgenl}
\mathcal{L} = \sum_{i=1}^n \theta_i \frac{\partial}{\partial x_i} +
2\sum_{i=1}^n z_i \frac{\partial^2}{\partial x^2_i}.
\end{equation}
Any twice continuously differentiable function $u$ that satisfies
$\mathcal{L} u=0$ in an (open) domain $D \subseteq\mathbb{R}^{n+}$
will be
called \textit{$\mathcal{L}$-harmonic} on $D$.

Define the inversion map $I\dvtx \mathbb{R}^{n+}\setminus\{0\}
\rightarrow\mathbb{R}
^{n+}$ by
%
\begin{equation}\label{whatisI}
I (z ) = \frac{z}{ ( \sum_{i=1}^n z_i )^2}.
\end{equation}
It is easy to see that $I$ is one-to-one and $I\circ I$ is the identity
map. Also, $I$ inverts the interior of the punctured unit simplex
$\usimp\setminus\{0\}$ to the interior of
its complement in $\mathbb{R}^{n+}\setminus\{0\}$. If $D$ is a
domain in
$\mathbb{R}^{n+}\setminus\{0\}$, we will denote its image under the
inversion map by $I(D)$.

Let $D$ be a domain in $\mathbb{R}^{n+}\setminus\{0\}$ and let $u$
be a
real-valued function on~$D$. One can define a function $K[u]\dvtx  I(D)
\rightarrow\mathbb{R}$ as
%
\begin{equation}\label{whatiskelvin}
K[u](z):= \Biggl( \sum_{i=1}^n z_i \Biggr)^{1-\theta_0/2} u (
I(z) ), \qquad  \theta_0 > 2.
\end{equation}
Thus, $K$ transforms a function on $D$ to a corresponding function on
$I(D)$. We prove that it takes $\mathcal{L}$-harmonic functions on $D$ to
$\mathcal{L}$-harmonic functions on $I(D)$. We have the following proposition.\vadjust{\goodbreak}

\begin{prop}\label{kelvintrans}
For any $C^2$ function $u$ on $D$, define
\[
\Psi(z)= \Biggl( \sum_{i=1}^n z_i \Biggr)^2 \mathcal{L} u(z), \qquad  z
\in D.
\]
$K[u]$ is then a $C^2$ function on $I(D)$, and we have
\[
\mathcal{L}K[u](z) = K [ \Psi](z) \qquad  \mbox{for all }
z\in I(D).
\]
Thus, if $u$ is $\mathcal{L}$-harmonic, then so is $K[u]$.
\end{prop}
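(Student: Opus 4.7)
The plan is to prove the identity by direct computation, exploiting the decomposition $\mcal{L} = A + B$ where $A = \sum_i \theta_i \partial_{z_i}$ is first-order and $B = 2\sum_i z_i \partial_{z_i}^2$ is second-order with no mixed partials. Writing $s := \sum_i z_i$, $w := I(z)$, and $\alpha := 1-\theta_0/2$, the absence of mixed partials yields the Leibniz-type identity
\[
\mcal{L}(fg) = f\,\mcal{L}g + g\,\mcal{L}f + 4\sum_{i=1}^n z_i (\partial_{z_i} f)(\partial_{z_i} g),
\]
which I would apply to $f = s^\alpha$ and $g = u\circ I$. The $C^2$ claim is immediate, since $I$ is smooth on $\rr^{n+}\setminus\{0\}$. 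The identity then reduces to three pieces: $\mcal{L}(s^\alpha)$, $\mcal{L}(u\circ I)$, and the gradient cross term.

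First I compute $\mcal{L}(s^\alpha)$. Using $\partial_{z_j} s = 1$ and $\partial_{z_j}^2 s = 0$,
\[
\mcal{L}(s^\alpha) = \alpha s^{\alpha-1}\bigl(\theta_0 + 2(\alpha-1)\bigr),
\]
which vanishes \emph{exactly} when $\alpha = 1-\theta_0/2$. This is the analogue of $|x|^{2-d}$ being Laplace-harmonic in $\rr^d$, and it pins down the correct exponent in the definition of $K$. Next I expand $\mcal{L}(u\circ I)$ via the chain rule, starting from $\partial_{z_j} w_i = s^{-2}\delta_{ij} - 2 s^{-3} z_i$ and $\partial_{z_j}^2 w_i = -4s^{-3}\delta_{ij} + 6s^{-4}z_i$. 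The crucial algebraic observation is
\[
\sum_{j=1}^n z_j (\partial_{z_j} w_i)(\partial_{z_j} w_k) = s^{-4}\,z_i\,\delta_{ik},
\]
so all off-diagonal contributions of the Hessian of $u$ cancel and only the diagonal second derivatives survive; this is the probabilistic analogue of the conformal property of spherical inversion. Combining with $\mcal{L}w_i = \theta_i/s^2 - 2(\theta_0-2)\,z_i/s^3$ and using $z_i/s^2 = w_i$, one obtains
\[
\mcal{L}(u\circ I)(z) = s^{-2}(\mcal{L}u)(w) - \frac{2(\theta_0-2)}{s}\sum_{i=1}^n w_i (\partial_i u)(w).
\]

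For the final assembly, a short computation shows $\sum_j z_j\,\partial_{z_j}(u\circ I)(z) = -\sum_i w_i (\partial_i u)(w)$, so the Leibniz cross term equals $-4\alpha s^{\alpha-1}\sum_i w_i (\partial_i u)(w)$. Adding the three pieces, the extraneous $\sum_i w_i (\partial_i u)(w)$ contributions appear with coefficient $-2 s^{\alpha-1}\bigl[(\theta_0-2) + 2\alpha\bigr]$, which vanishes precisely because $\alpha = 1-\theta_0/2$. What remains is
\[
\mcal{L}K[u](z) = s^{\alpha-2}(\mcal{L}u)(w) = s^{1-\theta_0/2}\cdot s^{-2}(\mcal{L}u)(w) = K[\Psi](z),
\]
since $\sum_i w_i = 1/s$ gives $\Psi(w) = (\sum_i w_i)^2(\mcal{L}u)(w) = s^{-2}(\mcal{L}u)(w)$. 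The implication $\mcal{L}u\equiv 0 \Rightarrow \mcal{L}K[u]\equiv 0$ is then immediate.

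The main obstacle is bookkeeping rather than conceptual: three independent-looking cancellations—$\mcal{L}s^\alpha = 0$, the off-diagonal Hessian cancellation, and the vanishing of the $\sum w_i\partial_i u$ remainder in the final sum—must all occur simultaneously. They do, for the same structural reason: the exponent $1-\theta_0/2$ is tuned to make the BESQ generator behave under the inversion $I$ exactly as the Laplacian does under spherical inversion in $\rr^{\theta_0}$. I would write up the argument by first recording those three auxiliary identities as separate sub-lemmas and then invoking the Leibniz formula at the very end.
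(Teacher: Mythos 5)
Your proof is correct --- I checked the three auxiliary identities ($\mcal{L}(s^{\alpha})=\alpha s^{\alpha-1}(\theta_0+2\alpha-2)$, the cancellation $\sum_j z_j(\partial_j w_i)(\partial_j w_k)=s^{-4}z_i\,1\{i=k\}$, and $\sum_j z_j\partial_j(u\circ I)=-\sum_i w_i(\partial_i u)(w)$) and the final assembly, and they all hold, with the two leftover coefficients vanishing exactly at $\alpha=1-\theta_0/2$. But your route is genuinely different from the paper's. The paper never runs the chain rule through the inversion for a general $u$: it first observes that for a homogeneous polynomial $p$ of degree $m$ one has $K[p](z)=(\sum_i z_i)^{1-\theta_0/2-2m}p(z)$, proves a separate lemma (via the same Leibniz formula you use, plus Euler's identity $\iprod{z,\nabla p}=mp$) that $\mcal{L}$ commutes with multiplication by $(\sum_i z_i)^{1-\theta_0/2-2m}$ on such $p$, matches both sides for homogeneous polynomials, and then extends by linearity and "suitable limits of polynomial sequences" to all $C^2$ functions. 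Your computation buys something real here: it treats an arbitrary $C^2$ function $u$ directly and pointwise, so no polynomial-approximation step is needed --- and that last step in the paper is the least explicit part of its argument, since one needs approximation of $u$ together with its first and second derivatives on the relevant domain. What the paper's route buys is lighter algebra (no Hessian-of-the-inversion bookkeeping; the whole conformal cancellation is hidden in the homogeneity exponent $-2m$), at the cost of the density argument. If you write yours up, do state the off-diagonal Hessian cancellation $\sum_j z_j(\partial_j w_i)(\partial_j w_k)=s^{-4}z_i\,1\{i=k\}$ as a displayed sub-lemma with its two-line verification, since it is the one identity a reader cannot guess from the Laplacian analogy alone.
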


To construct the proof we will need the following lemma.

\begin{lemma}\label{lemmahomog}
Let $p$ be a polynomial in $n$ variables that is homogeneous of degree
$m$. Then, on any domain $D \subseteq\mathbb{R}^{n+}\setminus\{0\}
$, we have
\[
\mathcal{L} \Biggl( \Biggl( \sum_{i=1}^n z_i \Biggr)^{1-\theta_0/2 -2m}
p(z) \Biggr)= \Biggl( \sum_{i=1}^n z_i \Biggr)^{1-\theta_0/2 - 2m}
\mathcal{L} p(z).
\]
\end{lemma}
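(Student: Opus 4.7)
The plan is a direct calculation using the product rule, Euler's identity for homogeneous functions, and the fact that the exponent $\alpha := 1 - \theta_0/2 - 2m$ is tuned precisely to make the cross terms vanish. Write $r(z) := \sum_{i=1}^n z_i$, so that the function in question is $r^\alpha p$. I will compute $\mcal{L}(r^\alpha p)$ by expanding $\partial_i$ and $\partial_i^2$ with the Leibniz rule, collect the purely $p$-terms, the pure $\mcal{L}p$-term, and the mixed first-derivative terms, and then show that everything except $r^\alpha \mcal{L}p$ cancels.

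Concretely, the first-derivative part of $\mcal{L}$ contributes
\[
\sum_i \theta_i \partial_i(r^\alpha p) = \alpha\theta_0\, r^{\alpha-1} p + r^\alpha \sum_i \theta_i \partial_i p,
\]
using $\partial_i r = 1$ and $\sum_i \theta_i = \theta_0$. The second-derivative part gives
\[
2\sum_i z_i \partial_i^2(r^\alpha p) = 2\alpha(\alpha-1)\, r^{\alpha-1} p + 4\alpha\, r^{\alpha-1}\!\!\sum_i z_i \partial_i p + 2 r^\alpha \sum_i z_i \partial_i^2 p,
\]
where I used $\sum_i z_i = r$ to absorb one power of $r$ in each of the first two terms. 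The third and last term of the second display, together with the second term of the first display, assemble into $r^\alpha \mcal{L}p$, which is the output we want.

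The key cancellation is in the remaining $r^{\alpha-1}$ coefficient of $p$. Euler's identity applied to the homogeneous polynomial $p$ of degree $m$ gives $\sum_i z_i \partial_i p = m\, p$, which turns the mixed term into $4\alpha m\, r^{\alpha-1} p$. Collecting the three $r^{\alpha-1} p$ contributions yields the prefactor
\[
\alpha\bigl[\theta_0 + 2(\alpha-1) + 4m\bigr].
\]
Substituting $\alpha = 1 - \theta_0/2 - 2m$ gives $2(\alpha-1) = -\theta_0 - 4m$, so the bracket is identically zero. Hence the cross terms vanish and the identity follows.

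The computation is entirely routine; the only ``obstacle'' is bookkeeping, and the conceptual point worth stating explicitly in the writeup is that the specific exponent $\alpha = 1 - \theta_0/2 - 2m$ is exactly what is needed for Euler's identity applied to a degree-$m$ homogeneous $p$ to kill the $r^{\alpha-1} p$ coefficient. This is the discrete-spectrum analogue of why the Kelvin exponent $1-\theta_0/2$ works for the homogeneous-of-degree-zero case $m=0$, and it is what makes the series expansion in Proposition \ref{kelvintrans} propagate termwise through $\mcal{L}$.
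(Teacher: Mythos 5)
Your proof is correct and takes essentially the same route as the paper: the paper packages the same computation via a product rule for $\mcal{L}$ together with the identity $\mcal{L}\bigl(\sum_i z_i\bigr)^r = r(\theta_0+2r-2)\bigl(\sum_i z_i\bigr)^{r-1}$ and Euler's relation $\sum_i z_i\partial_i p = mp$, arriving at the same vanishing condition $\theta_0+2\alpha-2+4m=0$ for the exponent. The only quibble is the aside that $\sum_i z_i = r$ is used to ``absorb one power of $r$ in each of the first two terms'' of the second-derivative part — it is needed only in the first of those terms, the second being handled by Euler's identity — but this is wording, not a gap.
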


\begin{pf}
First note that for any two $C^2$ functions $f,g$, we have
%
\begin{equation}
\mathcal{L}(fg)=f\mathcal{L}(g) + g\mathcal{L}(f) + 4\sum_{i=1}^n z_i
\partial_i f \,\partial_i g.
\end{equation}

Now, for any power $r$ we have
\begin{eqnarray*}
\mathcal{L} \Biggl( \sum_{i=1}^n z_i \Biggr)^{r} &=& r\sum_{i=1}^n
\theta_i \Biggl( \sum_{j=1}^n z_j \Biggr)^{r-1} + 2r(r-1)\sum
_{i=1}^n z_i \Biggl( \sum_{j=1}^n z_j \Biggr)^{r-2}\\
&=& r\sum_{i=1}^n \theta_i \Biggl( \sum_{j=1}^n z_j \Biggr)^{r-1} +
2r(r-1) \Biggl( \sum_{j=1}^n z_j \Biggr)^{r-1}\\
&=& r (\theta_0 + 2r - 2 ) \Biggl( \sum_{i=1}^n z_j \Biggr)^{r-1}.
\end{eqnarray*}
Thus, using the product formula~\eqref{prodform} we get
\begin{eqnarray*}
\mathcal{L} \Biggl( \Biggl( \sum_{i=1}^n z_i \Biggr)^{r} p(z) \Biggr)&=&
\Biggl( \sum_{i=1}^n z_i \Biggr)^{r} \mathcal{L}(p) + r (\theta_0
+ 2r - 2 ) \Biggl( \sum_{i=1}^n z_i \Biggr)^{r-1} p(z)\\
&&{} + 4r\sum_{i=1}^n z_i \Biggl( \sum_{j=1}^n z_j
\Biggr)^{r-1}\partial_i p\\
&=& \Biggl( \sum_{i=1}^n z_i \Biggr)^{r} \mathcal{L}(p) + r (\theta
_0 + 2r - 2 ) \Biggl( \sum_{i=1}^n z_i \Biggr)^{r-1} p(z)\\
&&{} + 4rm
\Biggl( \sum_{i=1}^n z_i \Biggr)^{r-1}p(z).
\end{eqnarray*}
The final equality follows from the fact that for all homogeneous
polynomials of degree $m$, we should have $\langle z,\nabla p \rangle= mp$.
The easiest way to see this is to note that $p(\alpha z)= \alpha^m
p(z)$ for all $\alpha> 0$, take the derivative with respect to $\alpha
$ and finally put $\alpha=1$.

Thus, we get
\[
\mathcal{L} \Biggl( \Biggl( \sum_{i=1}^n z_i \Biggr)^{r} p(z)
\Biggr)= \Biggl( \sum_{i=1}^n z_i \Biggr)^{r} \mathcal{L}(p) + r ( \theta
_0 + 2r -2 + 4m ) \Biggl( \sum_{i=1}^n z_i \Biggr)^{r-1} p(z).
\]
Choosing $r$ such that $\theta_0 + 2r -2 + 4m =0$ proves the lemma.
\end{pf}

\begin{pf*}{Proof of Proposition~\ref{kelvintrans}}
We first prove this proposition when $u$ is a polynomial $p$,
homogeneous of degree $m$. By utilizing the property of homogeneity, we
can write
%
\begin{eqnarray}\label{firsteq}
\mathcal{L} K[p] &=& \mathcal{L} \Biggl[ \Biggl( \sum_{i=1}^n z_i
\Biggr)^{1-\theta_0/2} p\Biggl ( \frac{z}{ (\sum_{i=1}^n z_i
)^2} \Biggr) \Biggr]\nonumber
\\
&=&\mathcal{L} \Biggl[ \Biggl( \sum_{i=1}^n z_i \Biggr)^{1-\theta_0/2 -
2m} p(z) \Biggr] \\
&=& \Biggl( \sum_{i=1}^n z_i \Biggr)^{1-\theta_0/2 -
2m}\mathcal{L}(p).
\nonumber
\end{eqnarray}
The final equality is due to Lemma~\ref{lemmahomog}.

Now, note that since $p$ is homogeneous of degree $m$, we have that
$\mathcal{L}(p)$ is homogeneous of degree $m-1$. Thus,
%
\begin{eqnarray}\label{seceq}
K \Biggl[ \Biggl( \sum_{i=1}^n z_i \Biggr)^{2}\mathcal{L}(p) \Biggr] &=&
\Biggl( \sum_{i=1}^n z_i \Biggr)^{1-\theta_0/2 - 2} \mathcal{L}p
\biggl( \frac{z}{ ( \sum_i z_i )^2} \biggr)\nonumber
\\
&=& \Biggl( \sum_{i=1}^n z_i \Biggr)^{1-\theta_0/2 - 2 - 2(m-1)}
\mathcal{L} p(z)\\
&=& \Biggl( \sum_{i=1}^n z_i \Biggr)^{1-\theta_0/2 -
2m}\mathcal{L}(p).
\nonumber
\end{eqnarray}
Combining equalities~\eqref{firsteq} and~\eqref{seceq} we get
\[
\mathcal{L} K[p] = K\Biggl [ \Biggl( \sum_{i=1}^n z_i \Biggr)^{2}\mathcal{L}(p) \Biggr],
\]
which proves the proposition for the special case of homogeneous polynomials.

The general result now follows for all polynomials (obtained by taking
linear combinations of the homogeneous ones) and finally for all $C^2$
functions (by taking suitable limits of polynomial sequences).
\end{pf*}

The explicit description of a Kelvin transform allows us to compute the
Green function for the independent BESQ processes inside the unit
simplex. As before, consider $Z=(Z_1, \ldots, Z_n)$ to be a vector of
independent BESQ processes with respective dimensions $\theta_1,
\ldots, \theta_n$, satisfying the assumptions of Proposition \ref
{exitderiv}. In that case, the process $Z$ is transient (the sum $\zeta
$ being a BESQ process that is transient).

Let $p_t^{\theta}(x,y)$ denote the transition density of BESQ$^{\theta
}$. Define the potential kernel of $Z$ as follows:
\[
u_y(x)=u(x,y) = \int_0^{\infty} \prod_{i=1}^n p_t^{\theta_i} (
x_i, y_i )\,dt, \qquad  x, y \in\mathbb{R}^{n+}.
\]
We compute this kernel below.

\begin{prop}
The potential kernel $u(x,y)$, when $\theta_0=\sum_{i=1}^n \theta_i
> 2$, is given by the following formula:
%
\begin{eqnarray}\label{whatisuxy}
u(x,y) &=&\frac{1}{2} S^{1-\theta_0/2}\prod_{i=1}^n y_i^{\theta
_i/2-1} \sum_{m=0}^{\infty} \Gamma( \theta_0/2 - 1 + 2m
) \frac{S^{-2m}}{m!} \nonumber
\\[-8pt]
\\[-8pt]
&&{}\times\sum_{k: k_1 + \cdots+ k_n =m} \combi{m}{k_1\cdots
k_n}\prod_{i=1}^n \frac{(x_iy_i)^{k_i}}{\Gamma(\theta_i/2 + k_i)},
\nonumber
\end{eqnarray}
where $S=\sum_{i=1}^n (x_i + y_i)$.
\end{prop}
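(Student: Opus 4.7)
The plan is to compute the potential by plugging in the explicit transition density and integrating in $t$. Writing $\nu_i = \theta_i/2 - 1$, the one-dimensional BESQ transition density has the power-series representation
\[
p_t^{\theta_i}(x_i, y_i) = \frac{1}{2t} \exp\!\left(-\frac{x_i + y_i}{2t}\right) \sum_{k_i=0}^{\infty} \frac{x_i^{k_i} \, y_i^{k_i + \theta_i/2 - 1}}{k_i! \, \Gamma(k_i + \theta_i/2) \, (2t)^{2k_i + \theta_i/2 - 1}},
\]
obtained by expanding $I_{\nu_i}$ in the classical Bessel-function formula for $p_t^{\theta_i}$ (see Revuz \& Yor \cite{yorbook}). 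The virtue of this expansion is that it treats $x_i = 0$ and $x_i > 0$ uniformly, and cleanly separates the $(x_i, y_i)$-dependence from the $t$-dependence.

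I would then multiply over $i = 1, \ldots, n$, distribute the product of series, and collect terms by the total degree $m = k_1 + \cdots + k_n$. The exponential factors combine into $\exp(-S/(2t))$ with $S = \sum_i (x_i + y_i)$, and the net power of $2t$ reduces to $-(2m + \theta_0/2)$ because $n + \sum_i(2k_i + \theta_i/2 - 1) = 2m + \theta_0/2$. After interchanging sum and integral (legitimate by Tonelli since every summand is nonnegative), the remaining $t$-integrals are standard Gamma integrals; the substitution $v = S/(2t)$ gives
\[
\int_0^{\infty} \exp\!\left(-\frac{S}{2t}\right) (2t)^{-2m - \theta_0/2} \, dt = \frac{1}{2} \, \Gamma\!\left(2m + \frac{\theta_0}{2} - 1\right) S^{1 - 2m - \theta_0/2}.
\]
Factoring out $S^{1 - \theta_0/2} \prod_i y_i^{\theta_i/2 - 1}$ and rewriting $\prod_i (k_i!)^{-1}$ as $(1/m!)\combi{m}{k_1 \cdots k_n}$ then reorganizes the series into exactly the form \eqref{whatisuxy}.

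The argument is essentially bookkeeping of exponents rather than a genuinely analytic computation. The only delicate point is convergence of the $t$-integral for the leading $m = 0$ term: the factor $\Gamma(\theta_0/2 - 1)$ is finite precisely when $\theta_0 > 2$, which is both the transience hypothesis for $\zeta$ and the reason the Green kernel exists at all. For $m \ge 1$ the integral converges unconditionally, with the exponential controlling $t \to 0$ and the polynomial decay controlling $t \to \infty$.
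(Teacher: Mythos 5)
Your proposal is correct and follows essentially the same route as the paper: the series you obtain by expanding $I_{\nu_i}$ is term-by-term identical to the paper's Poisson-mixture-of-gamma-densities expansion of $p_t^{\theta_i}$, and the subsequent steps (grouping by $m=k_1+\cdots+k_n$, Tonelli, the Gamma integral giving $\tfrac12\Gamma(2m+\theta_0/2-1)S^{1-2m-\theta_0/2}$, and the multinomial rewriting) coincide with the paper's computation. You also correctly isolate $\theta_0>2$ as needed only for the $m=0$ term, exactly as the paper notes.
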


\begin{pf}
The transition density of a BESQ process is explicitly described in
\cite{yorbook}, Appendix 7, page 549, to be $t^{-1}f(y/t, \theta,
x/t)$, where $f(\cdot,k,\lambda)$ is the density of a noncentral
chi-square distribution with $k$ degrees of freedom and a noncentrality
parameter value $\lambda$. In particular, it can be written as a
Poisson mixture of central chi-square (or gamma) densities. Thus, we
have the expansion
\[
p_t^{\theta}(x,y)= t^{-1}\sum_{k=0}^{\infty} e^{-x/2t}\frac
{(x/2t)^k}{k!} g_{\theta+2k}(y/t),
\]
where $g_{r}$ is the density of $\operatorname{Gamma}(r/2,1/2)$. Taking products over
$\theta_i$'s, we get
%
\begin{equation}\label{bigprod}
\prod_{i=1}^n p_t^{\theta_i}(x_i,y_i) = t^{-n}\prod_{i=1}^n \Biggl[
\sum_{k_i=0}^{\infty} e^{-x_i/2t}\frac{(x_i/2t)^{k_i}}{k_i!}
g_{\theta_i+2k_i}(y_i/t) \Biggr].
\end{equation}

Recall the special notation introduced in Section~\ref{notations} to
keep track of the various product terms.

Since every term in~\eqref{bigprod} is nonnegative, we can expand the
product as a series and get
%
\begin{eqnarray}\label{prodform}
t^n\prod_{i=1}^n p_t^{\theta_i}(x_i,y_i)
&=& \sum_{k_1, \ldots,k_n}
\prod_{i=1}^n e^{-x_i/2t} \frac{(x_i/2t)^{k_i}}{k_i!} g_{\theta
_i+2k_i}(y_i/t)\nonumber\\
&=& \sum_{k_1, \ldots,k_n} e^{-S_x/2t}\frac{x^k}{k!}(2t)^{-S_k} \prod
_{i=1}^n \frac{1}{\Gamma(\theta_i/2 + k_i)}2^{-\theta_i/2
-k_i}
\\
&&\hphantom{\sum_{k_1, \ldots,k_n} e^{-S_x/2t}\frac{x^k}{k!}(2t)^{-S_k} \prod
_{i=1}^n}
{}\times\biggl( \frac{y_i}{t} \biggr)^{\theta_i/2 - 1 + k_i}e^{-y_i/2t}\nonumber\\
&=& t^n \sum_{k_1, \ldots,k_n} e^{-(S_x+S_y)/2t} \frac
{x^k}{k!}(2t)^{-S_k}\beta({k}) (2t)^{-\theta_0/2 - S_k} y^{\nu+ k}.
\nonumber
\end{eqnarray}
Here, $\beta(k)$ denotes the constant given by $1/\beta(k)= \prod
_{i=1}^n {\Gamma(\theta_i/2 + k_i)}$.

To simplify~\eqref{prodform}, it will be convenient to define $S=S_x +
S_y= \sum_{i}(x_i + y_i)$. Thus, by regrouping terms we get
%
\begin{equation}\label{bigprod2}
\prod_{i=1}^n p_t^{\theta_i}(x_i,y_i) =
y^{\nu}\sum_{m=0}^{\infty} e^{-S/2t} (2t)^{-\theta_0/2 - 2m} \sum
_{k_1 + \cdots+ k_n =m} \beta(k)\frac{x^k y^k}{k!}.
\end{equation}
For notational convenience let us define
%
\begin{equation}\label{whatisbetac}
C(m)=\sum_{k_1 + \cdots+ k_n =m} \beta(k)\frac{x^k y^k}{k!}
\end{equation}
while we integrate out $t$ from the expression in~\eqref{bigprod2}.

Thus, we get
\[
u(x,y)= \int_0^{\infty} \prod_{i=1}^n p_t^{\theta_i}(x_i,y_i) \,dt =
y^{\nu}\sum_{m=0}^{\infty} C(m) \int_0^{\infty}e^{-S/2t}
(2t)^{-\theta_0/2 - 2m} \,dt.
\]

Evaluating the inner integral is easy. Changing the variable to
$w=1/2t$, we get
\begin{eqnarray*}
\int_0^{\infty}e^{-S/2t} (2t)^{-\theta_0/2 - 2m } \,dt
&=& \int
_0^{\infty} e^{-Sw}w^{\theta_0/2 + 2m }\frac{dw}{2w^2}\\
&=&
\frac{1}{2}\int_0^{\infty} w^{\theta_0/2 -2 + 2m}e^{-Sw} \,dw\\
 &=&
\frac{1}{2}\Gamma( \theta_0/2 - 1 + 2m )
S^{-2m+1-\theta_0/2}.
\end{eqnarray*}
Note that the assumption that $\theta_0 > 2$ is being used to show
that the integral above is finite when $m=0$.
This completes the derivation of the formula
\[
u(x,y)= \frac{1}{2} y^{\nu}\sum_{m=0}^{\infty} \Gamma(
\theta_0/2 - 1 + 2m ) S^{-2m+1-\theta_0/2} \sum_{k_i\ge0,
k_1 + \cdots+ k_n =m} \beta(k)\frac{x^k y^k}{k!}.
\]
The expression in~\eqref{whatisuxy} can be obtained from above by
dividing and multiplying by $m!$'s inside the infinite sum.
\end{pf}

\begin{lemma}\label{lemmauxy}
The potential kernel $u_y(x)$ is $\mathcal{L}$-harmonic in the
interior of $\mathbb{R}^{n+}$. Moreover, for a fixed value of $y$, it
has a
uniform decay of order $O(\sum_i x_i)^{1-\theta_0/2}$ as $\sum_i
x_i$ tends to infinity.
\end{lemma}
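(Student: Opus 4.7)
The lemma has two parts, and I would handle them independently.

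For the $\mcal{L}$-harmonicity, start from the integral representation
\[
u_y(x) = \int_0^{\infty} P_t(x,y)\, dt, \qquad P_t(x,y) := \prod_{i=1}^n p_t^{\theta_i}(x_i, y_i).
\]
Each $p_t^{\theta_i}(x_i, y_i)$ is a classical solution of the one-dimensional backward Kolmogorov equation $\partial_t p_t^{\theta_i} = L_i\, p_t^{\theta_i}$, with $L_i := 2 z_i \partial_{z_i}^2 + \theta_i \partial_{z_i}$. Because $\mcal{L} = \sum_i L_i$ is a sum of commuting one-variable operators acting on disjoint coordinates, the product $P_t$ satisfies $\partial_t P_t = \mcal{L}_x P_t$. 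Interchanging $\mcal{L}_x$ with the time integral (justified on any compactum in the interior avoiding $y$ by the uniform convergence of the series expansion for $P_t$ and its spatial derivatives) yields
\[
\mcal{L}_x u_y(x) = \int_0^{\infty} \partial_t P_t(x,y)\, dt = \lim_{t\to\infty} P_t(x,y) - \lim_{t\to 0} P_t(x,y).
\]
For $x \ne y$ both boundary terms vanish: the first because $Z$ is transient when $\theta_0 > 2$, the second because $P_t(\cdot,y)$ converges to a Dirac mass at $y$.

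For the uniform decay I would argue via the same integral representation. The BESQ transition density admits the closed form
\[
p_t^{\theta_i}(x_i, y_i) = \frac{1}{2t}\Bigl(\frac{y_i}{x_i}\Bigr)^{\nu_i/2} \exp\!\Bigl(-\frac{x_i+y_i}{2t}\Bigr) I_{\nu_i}\!\Bigl(\frac{\sqrt{x_iy_i}}{t}\Bigr), \qquad \nu_i = \theta_i/2 - 1,
\]
and the standard bound $I_{\nu_i}(z) \le C(\nu_i)\bigl(z^{\nu_i} + z^{-1/2} e^z\bigr)$ gives the pointwise estimate $P_t(x,y) \le C(y)\, t^{-\theta_0/2} \exp(-R^2/(2t))$, where $R^2 := \sum_i (\sqrt{x_i} - \sqrt{y_i})^2$. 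The $t$-integral is then a Gamma integral (convergent at $t=0$ since $\theta_0 > 2$) producing $u_y(x) \le C'(y)\, R^{2-\theta_0}$. Since $R^2 = S_x + S_y - 2\sum_i \sqrt{x_iy_i} \sim S_x$ as $S_x \to \infty$ for fixed $y$, the claimed $O(S_x^{1-\theta_0/2})$ rate follows.

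The main technical obstacle is the interchange of $\mcal{L}_x$ with $\int_0^\infty$, which I would settle by dominated convergence using the explicit series \eqref{whatisuxy} to majorize spatial derivatives of $P_t$ uniformly on compacta avoiding $y$. A secondary annoyance is treating points with some $x_i = 0$, where the Bessel form of $p_t^{\theta_i}$ must be replaced by its limiting Gamma density; in that case the decay can be read directly off \eqref{whatisuxy}, whose leading prefactor $S^{1-\theta_0/2}$ is already of the desired order and whose residual series is easily bounded using $x\cdot y/S^2 \le S_x S_y/(S_x+S_y)^2 \le 1/4$.
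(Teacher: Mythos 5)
Your route is genuinely different from the paper's. For the harmonicity the paper simply invokes the fact that $u$ is the potential kernel of a transient process (remarking that it can also be checked by direct computation); your backward-equation argument, $\mcal{L}_x u_y(x)=\int_0^\infty \partial_t P_t(x,y)\,dt = 0$ for $x\neq y$, is exactly that direct computation and is fine where the lemma is actually used (away from $y$). One small imprecision: the vanishing of $P_t(x,y)$ as $t\to\infty$ comes from the $t^{-\theta_0/2}$ spreading of the density, which holds for any $\theta_0>0$; the role of transience $\theta_0>2$ is to make the time integral converge at infinity. For the decay the paper never leaves the explicit series \eqref{whatisuxy}: it bounds the Gamma factors, collapses the inner sum with the multinomial identity to $s_x^m s_y^m$, and sums $\sum_m \binom{2m}{m}(s_y/s_x)^m$, which is a two-line algebraic estimate valid once $s_x$ is large compared to $s_y$. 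Your heat-kernel route instead aims at a pointwise Gaussian-type bound on $P_t$ and a Gamma integral, which is a legitimate and in some ways more informative alternative.

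However, your intermediate inequality $P_t(x,y)\le C(y)\,t^{-\theta_0/2}\exp(-R^2/(2t))$ fails when some $\theta_i\in(0,1)$, which the standing assumptions allow ($\theta_i>0$, $\theta_0>2$). In the regime $t\ll\sqrt{x_iy_i}$ the bound $I_{\nu_i}(z)\le C z^{-1/2}e^{z}$ yields the factor $C\,t^{-1/2}\,y_i^{\nu_i/2-1/4}\,x_i^{-\nu_i/2-1/4}\,e^{-(\sqrt{x_i}-\sqrt{y_i})^2/(2t)}$, and since $-\nu_i/2-1/4=(1-\theta_i)/4>0$ for $\theta_i<1$, its ratio to $C(y_i)t^{-\theta_i/2}$ is of order $(\sqrt{x_i}/t)^{(1-\theta_i)/2}$, which is unbounded; only when all $\theta_i\ge 1$ does the constraint $x_i\ge t^2/y_i$ convert the $x_i$-power into the needed $t$-power and make your bound correct. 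The conclusion still stands, but the step needs repair: e.g.\ split the $t$-integral, use your bound only for $t$ above a fixed threshold (where it is valid and gives $O(R^{2-\theta_0})=O(S_x^{1-\theta_0/2})$), and note that the small-$t$ contribution is $O(e^{-cS_x})$ because of the factor $\exp(-R^2/(2t))$, hence negligible. Alternatively, read the decay directly off \eqref{whatisuxy}, as you suggest for the boundary case and as the paper does for all cases; there, be slightly careful that the bound $\sum_i x_iy_i/S^2\le 1/4$ by itself is not quite enough (at ratio exactly $1/4$ the series $\sum_m\binom{2m}{m}4^{-m}$ barely fails to converge once the Gamma-factor corrections are included) — what closes the argument is that with $y$ fixed and $S_x\to\infty$ the effective ratio $s_y/s_x$ tends to zero.
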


\begin{pf} The first claim follows from the fact that $u$ is the
potential kernel, although it can be verified through direct computation.

For the second claim, let $s_x=\sum_i x_i$ and $s_y=\sum_i y_i$. In
what follows, we assume that $s_x$ is much larger than $s_y$.

Since we fix $y$ and the vector $\theta$, it follows from \eqref
{whatisuxy} that there is a constant~$C$ (depending on $y$ and $\theta
$) such that
%
\begin{eqnarray}\label{somebnd}
u(x,y)&\le &C s_x^{1-\theta_0/2} \sum_{m=0}^{\infty} (2m)! \frac
{s_x^{-2m}}{m!} \sum_{k: k_1 + \cdots+ k_n =m} \combi{m}{k_1\cdots
k_n}\prod_{i=1}^n \frac{(x_iy_i)^{k_i}}{k_i!}\nonumber
\\[-8pt]
\\[-8pt]
&=&C s_x^{1-\theta_0/2} \sum_{m=0}^{\infty} \frac{(2m) !}{(m!)^2}
s_x^{-2m} \sum_{k: k_1 + \cdots+ k_n =m} \combi{m}{k_1\cdots
k_n}^2\prod_{i=1}^n (x_iy_i)^{k_i}.
\nonumber
\end{eqnarray}

Recall the identity
\[
\sum_{k: k_1 + \cdots+ k_n =m} \combi{m}{k_1\cdots k_n}\prod
_{i=1}^n z_i^{k_i}= (z_1+ \cdots+ z_n)^m.
\]
Since all terms considered are positive, we get
\begin{eqnarray*}
&&\sum_{k: k_1 + \cdots+ k_n =m} \combi{m}{k_1\cdots k_n}^2\prod
_{i=1}^n (x_iy_i)^{k_i} \\
&& \qquad  \le\Biggl\{ \sum_{ k_1 + \cdots+ k_n =m} \combi{m}{k_1\cdots k_n}\prod
_{i=1}^n x_i^{k_i} \Biggr\}  \Biggl \{ \sum_{ k_1 + \cdots+ k_n =m}
\combi{m}{k_1\cdots k_n}\prod_{i=1}^n y_i^{k_i} \Biggr\}\\
&& \qquad = s_x^m s_y^m.
\end{eqnarray*}

Substituting this bound into~\eqref{somebnd}, we get
\[
u(x,y)\le C s_x^{1-\theta_0/2} \sum_{m=0}^{\infty} \combi{2m}{m}
\biggl(\frac{s_y}{s_x} \biggr)^m \le C s_x^{1-\theta_0/2},
\]
when $s_x$ is large enough. This proves the claim.
\end{pf}

For the next proposition recall the stopping time $\sigma_1$, which is
the hitting time of level one for the sum process $\zeta$.

\begin{prop}\label{propgreen}
Consider the functional transform defined in~\eqref{whatiskelvin} and
define the kernel
%
\begin{equation}\label{greensimp}
v(x,y) = u_y(x) - K[u_y](x)= u(x,y) - K[u_y](x), \qquad  x, y \in\mathbb{R}
^{n+}.
\end{equation}
Then, for every smooth nonnegative function $f$ which is compactly
supported away from the origin and any $x\in\usimp$, we get
%
\begin{equation}\label{greeneq}
E_x\int_0^{\sigma_1} f(Z(s))\,ds = \int_{\usimp} f(y) v(x,y) \,dy.
\end{equation}
In other words, $v$ is the Green potential on the unit simplex $\usimp
$ for the process~$Z$.

Explicitly, the kernel $v(x,y)$ is equal to
%
\begin{eqnarray}\label{whatisv}
&&   \frac{y^{\nu}}{2}\sum_{m=0}^{\infty} \frac{\Gamma(\theta_0/2 -
1 + 2m)}{m!} \{ ( S_x + S_y )^{-2m+1-\theta_0/2} -
( S_xS_y + 1 )^{-2m+1-\theta_0/2} \}\nonumber\hspace*{-35pt}
\\[-8pt]
\\[-8pt]
&& \hphantom{\frac{y^{\nu}}{2}\sum_{m=0}^{\infty}}
 {}\times\sum_{k_i\ge0: k_1 + \cdots+ k_n=m} \combi{m}{k_1\cdots
k_n} \prod_{i=1}^{n} \frac{(x_iy_i)^{k_i}}{\Gamma(\theta_i/2 + k_i)}.
\nonumber\hspace*{-35pt}
\end{eqnarray}
Thus, $v$ satisfies the symmetry property
%
\begin{equation}\label{vsymm}
v(x,y)\prod_{i=1}^n x_i^{\theta_i/2 -1} = v(y,x) \prod_{i=1}^n
y_i^{\theta_i/2 -1}.
\end{equation}
\end{prop}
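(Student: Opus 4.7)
My strategy is the classical representation of the Green potential of a domain as the free potential kernel minus its $\mcal{L}$-harmonic extension from the exit boundary. The plan is to show that $K[u_y]$ is exactly that harmonic extension for the unit simplex $\usimp$, and then invoke the strong Markov property at $\sigma_1$ together with optional stopping.

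First I would verify harmonicity and boundary matching for the correction term $K[u_y]$. For $x \in \usimp \setminus \{0\}$ the image $I(x)$ has $S_{I(x)} = 1/S_x \ge 1$, so $I(x)$ lies in the complement of the open unit simplex and in particular is distinct from $y$; hence $u_y$ is smooth and $\mcal{L}$-harmonic at $I(x)$, and Proposition \ref{kelvintrans} gives $\mcal{L} K[u_y] \equiv 0$ on $\usimp \setminus \{0\}$. When $S_x = 1$ one has $I(x) = x$ and the prefactor $S_x^{1-\theta_0/2} = 1$, so $K[u_y](x) = u_y(x)$ for every $x \in \bndsimp$. This is the boundary matching that makes $v = u - K[u_y]$ vanish on the exit set.

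Next, applying the strong Markov property at $\sigma_1$ to the free potential $u_f(x) := \int f(y)u(x,y)\,dy$ gives
\[
u_f(x) = E_x\!\int_0^{\sigma_1}\! f(Z_s)\,ds + E_x[u_f(Z(\sigma_1))],
\]
so the target identity \eqref{greeneq} reduces via Fubini to proving $E_x[u_y(Z(\sigma_1))] = K[u_y](x)$ for each fixed $y \in \usimp$. Since $Z(\sigma_1) \in \bndsimp$ almost surely, the boundary matching rewrites this as $E_x[K[u_y](Z(\sigma_1))] = K[u_y](x)$, which is precisely optional sampling applied to the $\mcal{L}$-martingale $K[u_y](Z_{t \wedge \sigma_1})$. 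To legitimize optional stopping I would appeal to Lemma \ref{lemmauxy}: the decay $u_y(w) = O(S_w^{1-\theta_0/2})$ as $S_w \to \infty$, combined with $S_{I(z)} = 1/S_z$, shows that $K[u_y]$ is uniformly bounded on $\usimp \setminus \{0\}$, so the martingale is bounded, and transience of $\zeta$ (from $\theta_0 > 2$) ensures $\sigma_1 < \infty$ a.s.

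Finally, the closed form \eqref{whatisv} is a direct substitution. Plugging $I(x)_i = x_i/S_x^2$ into \eqref{whatisuxy} yields sum variable $S_{I(x)} + S_y = (1 + S_x S_y)/S_x$ and monomial contribution $(I(x)_i y_i)^{k_i} = S_x^{-2k_i}(x_iy_i)^{k_i}$; the total factor $S_x^{-2m}$ from $k_1+\cdots+k_n = m$ combines with the outer $S^{-2m-\theta_0/2+1}$ and the prefactor $S_x^{1-\theta_0/2}$ to leave $(1+S_x S_y)^{-2m+1-\theta_0/2}$ term by term, and subtracting from \eqref{whatisuxy} produces \eqref{whatisv}. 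The symmetry \eqref{vsymm} is then immediate, since $S_x+S_y$, $1+S_x S_y$, and the inner sum $\sum_{|k|=m}\binom{m}{k_1\cdots k_n}\prod_i (x_iy_i)^{k_i}/\Gamma(\theta_i/2+k_i)$ are all symmetric in $x,y$, so only the $y^\nu$ prefactor breaks symmetry and swapping it for $x^\nu$ yields the stated relation. The main obstacle in this plan is the optional stopping step: one needs a uniform bound on $K[u_y]$ up to the apex $x=0$, which is precisely what Lemma \ref{lemmauxy} is designed to supply.
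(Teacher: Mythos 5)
Your proposal is correct in substance and follows the same skeleton as the paper: the decomposition $v=u_y-K[u_y]$, harmonicity of the correction via Proposition \ref{kelvintrans} applied to the exterior domain, the boundary matching $K[u_y]=u_y$ on $\bndsimp$ (since $I(x)=x$ and $S_x^{1-\theta_0/2}=1$ there), and an optional-stopping argument, with the explicit formula and the symmetry \eqref{vsymm} obtained by the same substitution. Where you genuinely diverge is in the treatment of the apex. The paper never claims a pointwise bound on $K[u_y]$ near the origin; instead it stops at $\sigma_1\wedge\sigma_\epsilon$, applies optional sampling there, and kills the error terms as $\epsilon\to 0$ using the scale-function estimate $P(\sigma_\epsilon<\sigma_1)=O(\epsilon^{\theta_0/2-1})$ together with the identity $W(f)(Z(\sigma_\epsilon))=\epsilon^{1-\theta_0/2}U(f)(\epsilon^{-2}Z(\sigma_\epsilon))$ and the decay of Lemma \ref{lemmauxy} along the inverted path. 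You instead observe that the same decay, read through the inversion $S_{I(z)}=1/S_z$, makes $K[u_y]$ uniformly bounded on $\usimp\setminus\{0\}$ for fixed $y$ in the open simplex (for the intermediate range $1\le S_{I(z)}\le R_0$ you also need that $u_y$ is bounded on $\usimp^c$ when $S_y<1$, which the paper notes), so that a bounded-martingale optional sampling at $\sigma_1$ replaces the $\epsilon\to 0$ limit; you also handle the free-potential part by the strong Markov property at $\sigma_1$ rather than via the paper's martingale $M_1$. This is a tidier passage to the limit, but note one caveat: boundedness alone does not give you the martingale property itself. Since the derivatives of $K[u_y]$ blow up as $S_z\to 0$, It\^o's formula only yields a local martingale after localizing at $\sigma_\epsilon$ (using that $\zeta$, of dimension $\theta_0>2$, a.s.\ never reaches $0$, so $\sigma_\epsilon\wedge\sigma_1\uparrow\sigma_1$); your uniform bound then upgrades this bounded local martingale to a true martingale and justifies sampling at the a.s.\ finite $\sigma_1$. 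So the $\sigma_\epsilon$-localization of the paper is not dispensed with, only the explicit scale-function bookkeeping — with that small repair your route is sound and somewhat more economical.
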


\begin{pf} To prove this proposition we first note that for any
compactly supported (in $\mathbb{R}^{n+}$) smooth test function $f$,
we have
\[
U(f)(x):=\int_{\mathbb{R}^{n+}} f(y) u(x,y)\,dy = E_x \int_0^{\infty} f
( Z(s) ) \,ds.
\]
Thus, by the Markov property, it follows that $M_1(t)=U(f)(Z(t)) +\break \int
_0^t f(Z(s))\,ds$ is a martingale [$Z(0)=x$] and that (Lemma \ref
{lemmauxy}) $\mathcal{L}u_y(x)=0$ for all $x \notin\usimp$ when
$y\in
\usimp$.

Fix a $y \in\usimp\setminus\bndsimp$. We now use Proposition \ref
{kelvintrans} for the domain $D=\{ x \in\mathbb{R}^{n+}\dvtx  x_i > 0, x
\notin\usimp\}$. It then follows that $K[u_y](x)$ is $\mathcal
{L}$-harmonic for all $x$ in $I(D)$, which is the interior of $\usimp$.

\begin{claim*} We now claim that if we define
\[
W(f)(x) := \int_{\mathbb{R}^{n+}} f(y) K[u_y](x) \,dy,
\]
then $M_2(t)= W(f)(Z(t\wedge\sigma_1 \wedge\sigma_{\epsilon}))$ is
a martingale for every $\epsilon> 0$ when \mbox{$Z(0) \in\usimp$}.
\end{claim*}

To prove this claim, it suffices to show that $K[u_y](Z(t\wedge\sigma
_1 \wedge\sigma_{\epsilon}))$ is a martingale for every $y$ in
$\usimp$. We apply It\^o's rule to the function $K[u_y]$. Since
$K[u_y](x)$ is $\mathcal{L}$-harmonic in the interior of $\usimp$, the
process $K[u_y](Z(t\wedge\sigma_1 \wedge\sigma_{\epsilon}))$ is a
local martingale with the decomposition
\[
dK[u_y]\bigl(Z(t\wedge\sigma_1 \wedge\sigma_{\epsilon})\bigr) = 2\sum
_{i=1}^n \sqrt{Z_i(t\wedge\sigma_1 \wedge\sigma_{\epsilon})}
\frac{\partial}{\partial z_i} K[u_y](Z(t))\,d\beta_i(t\wedge\sigma
_1 \wedge\sigma_{\epsilon}).
\]
The square bracket of this local martingale can be easily computed as
\begin{eqnarray*}
&&d \langle K[u_y](Z) \rangle(t\wedge\sigma_1 \wedge\sigma_{\epsilon})\\
&& \qquad =
4\sum_{i=1}^n Z_i(t\wedge\sigma_1 \wedge\sigma_{\epsilon})
\biggl\{ \frac{\partial}{\partial z_i} K[u_y]\bigl(Z(t\wedge\sigma_1 \wedge
\sigma_{\epsilon})\bigr) \biggr\}^2\,dt.
\end{eqnarray*}

Let us now compute the partial derivative:
\begin{eqnarray*}
\frac{\partial}{\partial x_i} K[u_y](x) &=& \frac{\partial}{\partial
x_i} \Biggl[ \biggl( \sum_i x_i \biggr)^{1-\theta_0/2} u \biggl(
\frac{x}{(\sum_i x_i)^2} \biggr) \Biggr]\nonumber\\
&=& (1-\theta_0/2) \biggl( \sum_i x_i \biggr)^{-\theta_0/2} u \biggl(
\frac{x}{(\sum_i x_i)^2} \biggr) \\
&&{}+ \biggl( \sum_i x_i \biggr)^{1-\theta_0/2}\sum_{j=1}^n u_j \biggl(
\frac{x}{(\sum_i x_i)^2} \biggr) \biggl[ \frac{1\{i=j\}}{(\sum_i
x_i)^2} - \frac{2x_j}{(\sum_i x_i)^3} \biggr].\nonumber
\end{eqnarray*}
Here, $u_j$ denotes the $j$th partial derivative of $u_y$.

Now, it can be seen from its explicit series expansion \eqref
{whatisuxy} that $u_y$ is bounded and has bounded partial derivatives
in $\usimp^c$ when $\sum_i y_i < 1$. Thus, from the expression above,
the partial derivatives of $K[u_y](x)$ are uniformly bounded when $x\in
\usimp$ and $\sum_i x_i > \epsilon> 0$. Hence, it follows that
$K[u_y](Z)(t \wedge\sigma_1 \wedge\sigma_{\epsilon})$ is a
martingale. By integrating with respect to $f(y)\,dy$, we have shown that
$M_2(t\wedge\sigma_{\epsilon})$ is a martingale for every $\epsilon
> 0$.

Thus, the process
\[
N(t)=U(f)\bigl(Z(t\wedge\sigma_1 \wedge\sigma_{\epsilon})\bigr) -
W(f)\bigl(Z(t\wedge\sigma_1 \wedge\sigma_{\epsilon})\bigr) + \int
_0^{t\wedge\sigma_1 \wedge\sigma_{\epsilon}} f ( Z(s)
) \,ds
\]
is also a martingale.

We now apply the optional sampling theorem to this martingale at the
stopping time $\sigma_1\wedge\sigma_{\epsilon}$ [notice that the
martingale is bounded\vadjust{\goodbreak} by $c_1 + c_2 (\sigma_1 \wedge\sigma_\epsilon
),$ which has a finite expectation]. There are two cases to consider.
When $\sigma_1 < \sigma_{\epsilon}$, we have $Z(t\wedge\sigma
_1\wedge\sigma_{\epsilon}) \in\bndsimp$. For any $x \in\bndsimp
$, we have $u_y(x)= K[u_y](x)$ and hence
\[
U(f)\bigl(Z( \sigma_1 \wedge\sigma_{\epsilon})\bigr) - W(f)\bigl(Z( \sigma_1
\wedge\sigma_{\epsilon})\bigr)=0  \qquad \mbox{when } \sigma_1 <
\sigma_{\epsilon}.
\]
In the other case (when $\sigma_{\epsilon} < \sigma_1$), we get
$\zeta(\sigma_1 \wedge\sigma_{\epsilon})=\epsilon$. Thus,
%
\begin{equation}\label{wfromu}
W(f)\bigl(Z( \sigma_1 \wedge\sigma_{\epsilon})\bigr)= \epsilon^{1-\theta
_0/2}U(f)\bigl(\epsilon^{-2}Z( \sigma_1 \wedge\sigma_{\epsilon})\bigr).
\end{equation}

Thus, we get
\begin{eqnarray*}
&&E_x  \int_0^{\sigma_1 \wedge\sigma_{\epsilon}} f ( Z(s)
) \,ds\\
&& \qquad  = \int_{\usimp}f(y)v(x,y)\,dy\\
&& \quad  \qquad {}+ E_x [ -U(f)(Z( \sigma_{\epsilon})) + W(f)(Z( \sigma
_{\epsilon})) \mid\sigma_{\epsilon} < \sigma_1 ] P (
\sigma_{\epsilon} < \sigma_1 ).
\end{eqnarray*}
Since $\zeta$ has dimension greater than two, it almost surely does
not hit the origin. Thus, it is clear that as $\epsilon$ tends to
zero, the left-hand side of the above equation converges to $E_x\int
_0^{\sigma_1} f ( Z(s) ) \,ds$. We now show that the
right-hand side converges to $\int_{\usimp} f(y)v(x,y)\,dy$.

Using the scale functions for $\zeta$, it is easy to see that
%
\begin{equation}\label{scalefn}
P ( \sigma_{\epsilon} < \sigma_1 )=\frac{S_x^{1-\theta
_0/2}-1}{\epsilon^{1-\theta_0/2} - 1}= O ( \epsilon^{\theta
_0/2 -1} ).
\end{equation}
Now, as $\epsilon$ tends to zero, $U(f)(Z( \sigma_{\epsilon}))$
remains bounded. The easiest way to see this is to note that $f$ has
compact support away from the origin, and $Z( \sigma_{\epsilon})$ is
away from all points in the support for sufficiently small $\epsilon$. Hence,
\[
\lim_{\epsilon\rightarrow0} E_x [ -U(f)(Z( \sigma_{\epsilon
}))\mid\sigma_{\epsilon} < \sigma_1 ] P ( \sigma
_{\epsilon} < \sigma_1 )=0.
\]

On the other hand, from~\eqref{wfromu} and~\eqref{scalefn} we get
\begin{eqnarray*}
&&\lim_{\epsilon\rightarrow0} E_x [ W(f)(Z( \sigma_{\epsilon
}))\mid\sigma_{\epsilon} < \sigma_1 ] P ( \sigma
_{\epsilon} < \sigma_1 )\\
&& \qquad = S_x^{1-\theta_0/2}\lim_{\epsilon
\rightarrow0} E_x [ U(f)(\epsilon^{-2}Z(\sigma_{\epsilon
}))\mid\sigma_{\epsilon} < \sigma_1 ].
\end{eqnarray*}
The limit on the right is zero since $\epsilon^{-2}Z(\sigma_{\epsilon
})$ tends to infinity, and thus the function $U(f)$ applied to it
uniformly goes to zero, by Lemma~\ref{lemmauxy}.

This completes the proof of the equality in~\eqref{greeneq}.

Now, we compute the kernel $v(x,y)$ from the formula \eqref
{whatisuxy}. We introduce the temporary notation
\[
R_m = \frac{\Gamma(\theta_0/2 - 1 + 2m)}{m!}\sum_{k: k_1 + \cdots
+ k_n=m} \combi{m}{k_1\cdots k_n} \prod_{i=1}^{n} \frac
{(x_iy_i)^{k_i}}{ \Gamma(\theta_i/2 + k_i)}.
\]
Thus, from~\eqref{whatisuxy}, we get
\begin{eqnarray*}
v(x,y)
&=& u(x,y) - \Biggl( \sum_{i=1}^n x_i \Biggr)^{1-\theta
_0/2}u \biggl( \frac{x}{(\sum_i x_i)^2} ,y \biggr)\\
&=& \frac{y^{\nu}}{2}S^{1-\theta_0/2}\sum_{m=0}^{\infty} R_m
S^{-2m}\\
&&{}-S_x^{1-\theta_0/2}\frac{y^{\nu}}{2} \biggl( \frac{1}{ S_x} + S_y
\biggr)^{1-\theta_0/2} \sum_{m=0}^{\infty} R_m S_x^{-2m} \biggl(
\frac{1}{ \sum_i x_i} + \sum_i y_i \biggr)^{-2m}\\
&=& \frac{y^{\nu}}{2}S^{1-\theta_0/2}\sum_{m=0}^{\infty} R_m
S^{-2m}\\
&&{}-\frac{y^{\nu}}{2} ( 1 + S_xS_y )^{1-\theta_0/2} \sum
_{m=0}^{\infty} R_m ( 1+ S_x S_y )^{-2m}\\
&=& \frac{y^{\nu}}{2}\sum_{m=0}^{\infty} R_m \{ ( S_x +
S_y )^{-2m+1-\theta_0/2} - ( S_xS_y + 1
)^{-2m+1-\theta_0/2} \}.
\end{eqnarray*}
This completes the derivation of the Green kernel.
\end{pf}

Finally, we derive the exit distribution from $\usimp$ for the process
$(Z_1, Z_2, \ldots,\break Z_n)$. Note that the transition density can be
guessed from the following version of Green's second identity for the
generator of the BESQ processes.
\begin{lemma}\label{besqgreen}
Let $\mathcal{L}$ be the generator in~\eqref{besqgenl}. Let $D
\subseteq
\usimp$ be a compact domain with piecewise smooth boundary. Let
$m=(m_1, \ldots, m_n)$ be the vector given by $m_i=x_i n_i(x)$, where
$n$ is the outward unit normal vector at a boundary point $x$.

Let $\omega(x)$ be the weight function $\omega(x) := x^{\nu}= \prod
_{j=1}^n x_j^{\theta_j/2 -1}$
and let $u,v$ be two functions on $D$ which are twice continuously
differentiable (continuous up to the boundary) on $D$.

Then, assuming that the right-hand side below is integrable, we have
%
\begin{equation}\label{green2eq}
\int_{D} ( u\mathcal{L}v - v\mathcal{L}u )\omega(x) \,dx = 2
\int_{\partial D} \biggl(u \frac{\partial v}{\partial m} - v \frac
{\partial u}{\partial m} \biggr) \omega(x)\sigma(dx).
\end{equation}
Here, $\sigma$ is the surface Lebesgue measure on $\partial D$.
\end{lemma}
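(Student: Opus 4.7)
The plan is to rewrite the generator $\mcal{L}$ in divergence form with respect to the weight $\omega$, and then reduce \eqref{green2eq} to the ordinary divergence theorem applied on $D$. The key computational observation, which I would establish first, is the identity
\[
\omega(x)\,\mcal{L}u(x) \;=\; 2\sum_{i=1}^n \frac{\partial}{\partial x_i}\!\left(x_i\,\omega(x)\,\frac{\partial u}{\partial x_i}\right),
\]
for any $C^2$ function $u$ on the interior of $D$. This is a short direct calculation: from $\omega(x)=\prod_j x_j^{\theta_j/2-1}$ one gets $\partial_i \omega = (\theta_i/2-1)x_i^{-1}\omega$, and therefore $\partial_i(x_i\omega) = (\theta_i/2)\omega$. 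Plugging into the right-hand side recovers exactly $\tfrac{\omega}{2}\bigl(\sum_i \theta_i \partial_i u + 2\sum_i x_i \partial_i^2 u\bigr)$, which is $\tfrac{\omega}{2}\mcal{L}u$.

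Once this divergence form is in hand, I would multiply by $u$ and apply the divergence theorem to the vector field with $i$th component $u\, x_i\,\omega\, \partial_i v$ on $D$. This yields
\[
\int_D u\,\omega\,\mcal{L}v\,dx \;=\; 2\int_{\partial D} u\,\omega \sum_{i=1}^n x_i n_i\,\frac{\partial v}{\partial x_i}\,\sigma(dx) \;-\; 2\int_D \omega \sum_{i=1}^n x_i\,\frac{\partial u}{\partial x_i}\frac{\partial v}{\partial x_i}\,dx.
\]
The interior integral on the right is symmetric in $(u,v)$. Swapping the roles of $u$ and $v$ and subtracting cancels it out, and the boundary integral collapses to
\[
2\int_{\partial D}\bigl(u\,\partial_m v - v\,\partial_m u\bigr)\,\omega(x)\,\sigma(dx),
\]
since $\partial_m = \sum_i m_i \partial_i = \sum_i x_i n_i\,\partial_i$ by the definition of $m$. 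This is precisely \eqref{green2eq}.

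The main obstacle is a technical one: the weight $\omega$ carries singularities of order $x_i^{\theta_i/2-1}$ on the coordinate hyperplanes $\{x_i=0\}$, and $D$ is allowed to meet these faces. However, the hypothesis that the right-hand side is integrable takes care of the volume integrals, and on any piece of $\partial D$ that lies in a coordinate hyperplane $\{x_i=0\}$ the weighted normal vector $m$ vanishes identically (either $x_i=0$, killing the $i$th component, or the outward normal has no component in the other directions), so no spurious contribution appears from those faces. If one wishes to be fully rigorous, the divergence theorem should be applied on $D_\epsilon := D\cap \{x_i\ge\epsilon,\ \forall i\}$, and one then lets $\epsilon\downarrow 0$; the surface terms on the artificial faces $\{x_i=\epsilon\}$ vanish in the limit because of the factor $x_i$ in $m_i$, provided $\theta_i>0$. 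This completes the proof.
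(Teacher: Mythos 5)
Your proof is correct and follows essentially the same route as the paper: both rewrite $\mcal{L}$ in weighted divergence form (the paper writes $\mcal{L}v = 2\sum_i x_i^{1-\theta_i/2}\partial_i\bigl[x_i^{\theta_i/2}\partial_i v\bigr]$, which is your identity $\omega\,\mcal{L}v = 2\sum_i \partial_i(x_i\omega\,\partial_i v)$), apply the divergence theorem to the vector field with components $2u\,x_i\omega\,\partial_i v$, and then antisymmetrize in $u,v$ so the interior cross term cancels. Your additional $\epsilon$-truncation remark handling the singular weight near the coordinate faces (using $\theta_i>0$) is a sensible extra precaution that the paper's proof leaves implicit.
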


\begin{pf} Recall the generator $\mathcal{L}$ for BESQ processes:
\[
\mathcal{L}v = \sum_{i=1}^n \theta_i \frac{\partial}{\partial
x_i} v
+ 2\sum_{i=1}^n x_i \frac{\partial^2}{\partial x^2_i} v= 2\sum
_{i=1}^n x_i^{1-\theta_i/2}\frac{\partial}{\partial x_i}\biggl [
x_i^{\theta_i/2} \frac{\partial v}{\partial x_i} \biggr].
\]

We now use the divergence theorem. Let $n$ denote the outward unit
normal vector on a compact domain $D$ with a piecewise smooth boundary.
Given a vector field of continuously differentiable functions $F=(f_1,
\ldots, f_n)$ on $D$, define $\operatorname{div} F = \sum_{i=1}^n
\partial
_i f_i$. Then,
\[
\int_D \operatorname{div} F(x) \,dx = \int_{\partial D} F\cdot n(y)
\sigma(dy).
\]
Here, $\partial D$ is the boundary of $D$, and $\sigma(dy)$ is the
surface Lebesgue measure on $\partial D$.

Let $\mu$ be the measure on $\usimp$ given by the density $\omega
(x)$. We now use the divergence theorem to derive the following
multivariate integration by parts:
\begin{eqnarray*}
\int_{D} u \mathcal{L} v\, d\mu
&=& 2 \int_{D} \sum_{i=1}^n u
x_i^{1-\theta_i/2} \frac{\partial}{\partial x_i} \biggl[ x_i^{\theta
_i/2} \frac{\partial v}{\partial x_i} \biggr] \prod_{j=1}^n
x_j^{\theta_j/2 -1} \,dx\\
&=& - 2 \int_{D} \sum_{i=1}^n x_i \frac{\partial u}{\partial x_i}
\frac{\partial v}{\partial x_i} \prod_{j=1}^n x_j^{\theta_j/2 -1} \,dx
+ \int_{\partial D} F\cdot n(y) \sigma(dy).
\end{eqnarray*}
The last equality above is obtained by applying the divergence theorem
to the function $F=(f_1, \ldots, f_n),$ where
\[
f_i = 2 x_i u \frac{\partial v}{\partial x_i} \prod_{j=1}^n
x_j^{\theta_j/2 -1}=2 u \biggl[ x_i^{\theta_i/2} \frac{\partial
v}{\partial x_i} \biggr]\prod_{j\neq i} x_j^{\theta_j/2 -1}.
\]
Interchanging $u$ and $v$ above and taking a difference, we arrive at
\eqref{green2eq}.
\end{pf}

\begin{pf*}{Proof of Proposition~\ref{exitderiv}}
We first use the previous lemma to produce a convincing \textit
{heuristic} derivation of the transition density.

\textit{Step \textup{1:} Heuristics.} It suffices to prove formula
\eqref{exitdensity} for an arbitrary $z$ in the open unit simplex. Fix
any $\epsilon> 0$, small enough that $B(z, \epsilon)$ is contained in
the interior of $\usimp$.

Consider a smooth nonnegative function $f$ on $\bndsimp$. Let $\psi$
be a function on $\mathbb{R}^{n+}$ which is nonnegative, smooth and zero
outside $B(z, \epsilon)$. We will use $\psi$ as an approximation of
the delta mass at $z$.

Consider the two functions defined on $\usimp$: $h(x)=\int v(x,y)\psi
(y) \,dy$ and $l(x)= E_x f ( Z_{\sigma_1} )$. In the
interior of $\usimp$ we have $\mathcal{L} h =-\psi$, by virtue of $v$
being the Green potential, and $l(x)$ is $\mathcal{L}$-harmonic, as a
corollary of its definition. Assuming that both of these functions are
also smooth, with derivatives extending continuously to the boundary,
we can apply the extended Green's identity~\eqref{green2eq} for $u=l$,
$v=h$ and $D=\usimp$ to get
%
\begin{equation}\label{ibp1}
-\int_{B(z,\epsilon)} l(y) \psi(y) \omega(y) \,dy = 2 \int_{\partial
\usimp} \biggl( l \frac{\partial h}{\partial m} -h \frac{\partial
l}{\partial m} \biggr) \omega(x)\sigma(dx).
\end{equation}
Let us now analyze the right-hand side of the above equation. The
surface $\partial\usimp$ is piecewise linear and consists of the
subsets $S_1, S_2, \ldots, S_n$ and $S_0=\bndsimp$, where the outward
normal vector for $S_i$ is $-e_i$ for $i=1,2,\ldots,n$, and for
$\bndsimp$, the vector is $n^{-1/2}1$, the normalized vector of all
1's. Thus, integrating separately on each $S_i$ and temporarily
dropping the constant $2$, we get
%
\begin{eqnarray}\label{ibp2}
\int_{\partial\usimp} \biggl( l \frac{\partial h}{\partial m} -h
\frac{\partial l}{\partial m} \biggr) \omega(x)\sigma(dx)&=& \sum
_{i=0}^n \int_{S_i} \biggl( l \frac{\partial h}{\partial m} - h \frac
{\partial l}{\partial m} \biggr) \omega(x)\sigma(dx)\nonumber
\\
&=&- \sum_{i=1}^n \int_{S_i} x_i \biggl( l \frac{\partial h}{\partial
x_i} - h \frac{\partial l}{\partial x_i} \biggr) \omega(x)\sigma
(dx)\\
&&{}+ \frac{1}{\sqrt{n}}\int_{\bndsimp} \sum_{i=1}^n x_i \biggl( l
\frac{\partial h}{\partial x_i} - h \frac{\partial l}{\partial x_i}
\biggr) \omega(x)\sigma(dx).
\nonumber
\end{eqnarray}
Over each $S_i$, for $i=1,2,\ldots,n$, the $i$th coordinate $x_i$ is
zero. Due to the fact that each $\theta_i > 0,$ and assuming that $h$,
$l$ and their partial derivatives are well behaved, the integral above
must be zero. Over $\bndsimp$, by definition, we have $h=0$ and $l=f$.
Thus, combining~\eqref{ibp1} and~\eqref{ibp2} we get
%
\begin{equation} \quad
-\int_{B(z,\epsilon)} l(y) \psi(y) \omega(y) \,dy = 2n^{-1/2} \int
_{\bndsimp} \sum_{i=1}^n x_i \biggl( f(x) \frac{\partial h}{\partial
x_i} \biggr) \omega(x) \sigma(dx).
\end{equation}
We now take a sequence of $\psi$'s, functions approximating the delta
function, such that both $h$ and its partial derivatives converge to
$v_z$ and its corresponding partial derivatives. We thus infer
that
%
\begin{equation}\label{ibp3}
l(z) \omega(z)=-2\int_{\bndsimp} f(x) \omega(x) \sum_{i=1}^n x_i
\frac{\partial}{\partial x_i} v(x,z) \,dx.
\end{equation}
The $1/\sqrt{n}$ factor gets absorbed when we parametrize the surface
$\bndsimp$ by $\mathbb{R}^{n-1}$ (and hence $dx$ represents the induced
measure from the Lebesgue measure on $\mathbb{R}^{n-1}$).

Since $l(z)=E_{z} f(Z_{\sigma_1})$ this identifies the exit density as
%
\begin{equation}\label{whatisphiv}
\varphi_z(x)=\varphi(z,x)=-2\frac{\omega(x)}{\omega(z)} \Biggl[\sum
_{i=1}^n x_i \frac{\partial}{\partial x_i}v(x,z) \Biggr], \qquad  x\in
\bndsimp, z\in\usimp.
\end{equation}
The problem with the above argument is that a priori we do not know the
regularity of the exit distribution at the boundary of the simplex.
However, once we have guessed the solution, we can easily check that it
must be the correct one.

\textit{Step \textup{2:} Computation based on heuristics.} Let us now
compute explicitly the expression~\eqref{whatisphiv}. To simplify
matters, we introduce some temporary notation: for $m=0,1,2,\ldots$ let
\begin{eqnarray*}
B_m &=& \Biggl( \sum_{i=1}^n z_i + \sum_{i=1}^n x_i
\Biggr)^{-2m+1-\theta_0/2} - \Biggl( \Biggl(\sum_{i=1}^n z_i \Biggr)\Biggl (
\sum_{i=1}^n x_i \Biggr) + 1 \Biggr)^{-2m+1-\theta_0/2},\\
D_m &=& \sum_{k: k_1 + \cdots+ k_n=m} \combi{m}{k_1\cdots k_n} \prod
_{i=1}^n \frac{(z_i x_i)^{k_i}}{\Gamma(\theta_i/2 + k_i)}.
\end{eqnarray*}
Thus, from~\eqref{greensimp}, we get
\[
\frac{\partial}{\partial x_i} v(x,z) = \frac{z^{\nu}}{2} \sum
_{m=0}^{\infty} \frac{\Gamma(\theta_0/2 - 1 + 2m)}{m !} \biggl[ D_m
\frac{\partial}{\partial x_i} B_m + B_m \frac{\partial}{\partial
x_i} D_m \biggr].
\]
Now, when $z$ is in the open unit simplex and $x\in\bndsimp$, we get
\begin{eqnarray*}
\frac{\partial}{\partial x_i}  B_m &=& (-2m+1-\theta_0/2) \Biggl( \sum
_{i=1}^n z_i + 1 \Biggr)^{-2m-\theta_0/2}\\
&&{}- (-2m+1-\theta_0/2) \Biggl(\sum_{i=1}^n z_i \Biggr)\Biggl ( \sum
_{i=1}^n z_i + 1 \Biggr)^{-2m-\theta_0/2}\\
&=&(1-\theta_0/2-2m) \Biggl(1-\sum_{i=1}^n z_i \Biggr)\Biggl ( \sum
_{i=1}^n z_i + 1 \Biggr)^{-2m-\theta_0/2}.
\end{eqnarray*}
We do not need to compute partial derivatives of $D_m$ since $B_m$ is
zero on~$\bndsimp$.

Thus, by combining the partial derivatives of $B_m$, we get that
$\varphi(z,x)$ is equal to
%
\begin{eqnarray}\label{exitdensity0}
&&\omega(x)\sum_{m=0}^{\infty}\frac{\Gamma(\theta_0/2 - 1 +
2m)}{m!}( 2m + \theta_0/2 -1)\nonumber
\\[-8pt]
\\[-8pt]
&&\hphantom{\omega(x)\sum_{m=0}^{\infty}}{}\times\Biggl(1-\sum_{i=1}^n z_i \Biggr)
\Biggl(1+ \sum_{i=1}^n z_i \Biggr)^{-2m-\theta_0/2}D_m,
\nonumber
\end{eqnarray}
which leads to formula~\eqref{exitdensity} by substituting $S_z$ for
$\sum_i z_i$, noting that $\Gamma(\theta_0/2 - 1 + 2m)( 2m + \theta
_0/2 -1)=\Gamma(\theta_0/2+2m)$ and completing the terms in the
$\operatorname{Dir}$ density.

\textit{Step \textup{3:} A rigorous proof.}
We now show rigorously that the above formula is the true exit density.
To do this we merely need to check that the heuristic derivation shown
in step~1 goes through.

We first claim that $\varphi(z,x)$, as given by~\eqref{whatisphiv},
is in the kernel of $\mathcal{L}$ in the first coordinate. That is,
$\mathcal{L}_z \varphi(z,x)=0$ for all $z$ in the open unit simplex when\vadjust{\goodbreak}
$x\in\bndsimp$. To see this, we use the symmetry property of the
Green potential~\eqref{vsymm}. Thus,
%
\begin{eqnarray}\label{expression1}
\varphi(z,x)&=&-2\frac{\omega(x)}{\omega(z)} \Biggl[\sum_{i=1}^n x_i
\frac{\partial}{\partial x_i}v(x,z) \Biggr]= -2\omega(x) \sum
_{i=1}^n x_i \frac{\partial}{\partial x_i} \frac{v(x,z)}{\omega
(z)}\nonumber
\\[-8pt]
\\[-8pt]
&=& -2\omega(x) \sum_{i=1}^n x_i \frac{\partial}{\partial x_i} \frac
{v(z,x)}{\omega(x)}.
\nonumber
\end{eqnarray}
Since $\mathcal{L}_zv(z,x)=0$ for all $z$ in the interior of $\usimp$,
it immediately follows that $\mathcal{L}_z\varphi(z,x)$ must also be zero.

Now, consider any smooth test function $f$ on $\bndsimp$ and,
following step 1, for any $y\neq0$ in $\usimp\setminus\bndsimp$, define
%
\begin{equation}\label{newl}
l(y)= \int_{\bndsimp} f(x) \varphi(y,x) \,dx.
\end{equation}
Now, from the explicit formula for $\varphi(\cdot,x)$ in \eqref
{exitdensity0} and usual analysis, it follows that $\partial\varphi
/\partial m$ (and hence $\partial l/ \partial m$) is a well-defined
power series up to the boundary of $\usimp$. The function $h$ in
\eqref{ibp1} is a convolution with a smooth mollifier $\psi$ and is
obviously smooth. Thus,~\eqref{ibp1} and~\eqref{ibp2} go through and
we arrive at the following modification of~\eqref{ibp3}:
%
\begin{equation}\label{ibp4}
  l(z) \omega(z) = -2 \int_{\bndsimp} l(x) \omega(x) \sum_{i=1}^n
x_i \frac{\partial}{\partial x_i} v(x,z)\,dx= \int_{\bndsimp}
l(x)\omega(z) \varphi(z,x) \,dx.\hspace*{-35pt}
\end{equation}

We now cancel the factor $\omega(z)$ appearing on both sides of \eqref
{ibp4} and compare with the definition of $l(z)$ in~\eqref{newl} to get
%
\begin{equation}\label{ibp10}
\int_{\bndsimp} f(x) \varphi(z,x) \,dx = \int_{\bndsimp} l(x)
\varphi(z,x) \,dx.
\end{equation}
We now vary $z$ in $\usimp$ and use the following claim.

\begin{claim*} If $u\dvtx  \bndsimp\rightarrow\mathbb{R}$ is a
bounded continuous function such that
%
\begin{equation}\label{claimcond}
\int_{\bndsimp} u(x) \varphi(z,x) \,dx =0  \qquad \mbox{for all } z\in
\usimp,
\end{equation}
then $u$ is identically zero.
\end{claim*}

Assuming the claim for now, we get $l(x)=f(x)$ for all $x \in\bndsimp
$. Thus, the function $l$ is harmonic with the correct boundary
condition. The rest of the proof is now immediate. Since our density
$\varphi$ is supported over a compact set, it is enough to evaluate
expectations of monomials under the density. For any monomial $p=\prod
_{i=1}^n x_i^{\gamma_i}$ with each $\gamma_i \ge1$, we consider the function
\[
H(z) = \int_{\bndsimp} p(x) \varphi(z,x)\,dx, \qquad  z\in\usimp.
\]
We claim that the process $H(Z)(t\wedge\sigma_1 \wedge\sigma
_{\epsilon})$ is a martingale for any $\epsilon>0$. We have already
shown that $\mathcal{L}H=0$ inside $\usimp$ (since $\mathcal{L}
\varphi
=0,$ and taking the derivative inside the integral sign, which is
allowed since everything is smooth and bounded). Thus, the claim
follows by noting from the explicit expansion of $\varphi$ that the
first partial derivatives of $H$ are bounded on any domain away from
the origin. By applying the optional sampling theorem we get $H(Z(0))=
E H(Z(\sigma_1 \wedge\sigma_{\epsilon}))$. We can now take
$\epsilon$ to zero, arguing exactly as in the proof of Proposition
\ref{propgreen}, to claim that $H(Z(0))= E H(Z(\sigma_1))$. However,
by~\eqref{ibp10} and the above claim, we see that $H(Z(\sigma
_1))=p(Z(\sigma_1))$. Hence, we get
\[
E_z p ( Z(\sigma_1) )= \int_{\bndsimp} p(x) \varphi(z,x)\,dx.
\]
Since the above identity holds for all monomials $p$, this completes
the proof.

It remains to prove the above claim. Note that since $\varphi$ is a
power series, one can take the integral inside the sum in expression
\eqref{exitdensity}. Let $\varphi^u$ be the resulting power series in
$z$ which is identically zero under~\eqref{claimcond}. Let us define a
change of variables, $q_i=(1+ S_z)^{-2}z_i,$ in expression \eqref
{exitdensity} and expand $\varphi^u$ as below:
%
\begin{equation}\label{varphiu}\qquad
\varphi^u(z)= (1-S_z)(1+S_z)^{-\theta_0/2}\sum_{m=0}^\infty t(m)
\sum_{k_1+ \cdots+ k_n=m} \rho(\mathbf{k}) \prod_{i=1}^n q_i^{k_i}
\equiv0.
\end{equation}
Here, $t(m)$ is the coefficient of the $m$th internal summand, as in
\eqref{exitdensity}, and
\[
\rho(\mathbf{k})= \combi{m}{k_1\cdots k_n} \int_{\bndsimp} u(x)
\operatorname{Dir}(x, k+ \theta/2) \,dx.
\]

Since the right-hand side of~\eqref{varphiu} is identically zero for
all $z\in\usimp$, it follows that the inner power series (as a
function of the variables $q_1, \ldots, q_n$) must be zero.

Now, fix any collection of positive integers $k_1, \ldots, k_n$. By
taking repeated partial derivatives $\partial_1^{k_1}\cdots\partial
_n^{k_n}\varphi^u(q)$ and letting each $q_i$ tend to zero, we obtain
that $\rho(\mathbf{k})$ must be zero for all $\mathbf{k}=(k_1,
\ldots, k_n)$. However, from the structure of the Dirichlet densities,
this shows that all multivariate moments of the measure $u(x)\omega
(x)\,dx$ on $\bndsimp$ must be zero. Since $\bndsimp$ is compact, this
identifies $u$ as the zero function.
\end{pf*}

The proof of Proposition~\ref{vsmmarket} follows immediately by
combining Proposition~\ref{exitderiv}, equation~\eqref{ridoftime} and
the discussion following it. One simply needs to keep in mind that in
keeping with the time change relationship~\eqref{vsmbes}, to compute
the distribution of market weights under the model $V(\delta_1, \ldots
, \delta_n),$ we need to compute the exit density for $n$ independent
BESQ's with dimensions $\theta_i=2\delta_i$ for $i=1,2,\ldots,n$.

Readers might wonder if there is any direct way of seeing that the
density expression in~\eqref{exitdensity} [and hence~\eqref{mwtden}]
integrates to 1. Although not elementary, the following argument is
such a direct method and serves as a sanity check.

Assume, for notational simplicity, that $\theta_0/2$ is a positive
integer $r$. Let $s$ denote $S_z$. Since the integral of each
$\operatorname{Dir}$
density appearing on the right-hand side of~\eqref{exitdensity} equals
1, by an application of the Fubini--Tonelli theorem, we get
%
\begin{eqnarray}\label{direct}
\int_{\bndsimp} \varphi_z(y)\,dy&=& (1-s) \sum_{m=0}^\infty\combi
{2m+r -1}{m} (1+s)^{-2m-r}\nonumber\\
&&\hphantom{(1-s) \sum_{m=0}^\infty}
{}\times \sum_{k_1+ \cdots+ k_n=m} \combi{m}{k_1
\cdots k_n} \prod_{i=1}^{n} z_i^{k_i}\nonumber\\
&=& (1-s) \sum_{m=0}^\infty\combi{2m+r -1}{m} (1+s)^{-2m-r}
s^m
\\
&=& (1-s)\sum_{m=0}^\infty\combi{2m+r -1}{m} p^m q^{m+r}, \qquad
p=\frac{s}{1+s}=1-q\nonumber\\
&=& (1-s)q\sum_{m=0}^\infty P_0 ( \mathbb{S}_{2m+r-1}=1-r ).
\nonumber
\end{eqnarray}
Here, $P_a$ refers to the law of a downward-biased random walk $\{
\mathbb{S}_k\}$ with probability $p$ of going up at each step and
starting from $a$ at time zero.

Let $N_{k}$ denote the (almost surely finite) number $\sum
_{m=1}^\infty1\{ \mathbb{S}_m=k \}$. It then follows easily that
\[
\sum_{m=0}^\infty P_0 ( \mathbb{S}_{2m+r-1}=1-r )= E_0
N_{1-r} = 1 + E_{1-r} N_{1-r}= 1 + E_0 N_0=\frac{1}{1-\pi}.
\]
Here, $\pi$ is the return probability of the walk to zero, starting at
zero. Since $\pi$ is well known to be $2p$, one can substitute this
value into~\eqref{direct} and get
\[
\int_{\bndsimp} \varphi_z(y)\,dy=\frac{(1-s)q}{1-2p}=1
\]
since $s=p/q$. This completes the verification.

\section{A skew-product decomposition result}\label{polar}

The BESQ family of measures is well known to be an additive family.
This can be utilized to embed multidimensional BESQ processes in a
\textit{measure-valued} BESQ process, as done by Shiga and Watanabe
\cite{shigawatanabe} and Pitman and Yor~\cite{besselbridge}. We
follow the statement and notation from~\cite{besselbridge}, Theorem 4.1.

Let $C[0,\infty)$ be the canonical space of continuous paths with the
usual topology. There exists a $C[0,\infty)$-valued process $ (
Y_x^\theta, \theta\ge0, x\ge0 )$\vadjust{\goodbreak} such that $Y_x^\theta$ has
law BESQ$_x^\theta$. Moreover, we have the additive decomposition
\[
Y_x^\theta= Y_x^0 + Y_0^\theta, \qquad  x\ge0,\ \theta\ge0,
\]
where $(Y^0_x, x\ge0)$ and $(Y_0^\theta, \theta\ge0)$ are
independent processes with stationary independent increments, each
having trajectories which are increasing and right-continuous with left
limits in $C[0,\infty)$.
In other words both $Y_0^\theta$ and $Y^0_x$ are independent
$C[0,\infty)$-valued L\'evy processes.

Now, fix any nonnegative $\theta_0$. Let $F$ be any distribution
function (increasing, right-continuous with left limits) on $[0,\theta
_0]$. Consider the $C[0,\infty)$-valued process $ (\Gamma_d,
0\le d \le\theta_0 )$, where
\[
\Gamma_d = Y^d_{F(d)}= Y^0_{F(d)} + Y^d_0 , \qquad  0\le d \le\theta_0.
\]

Let $\mathcal{P}([0,\theta_0])$ be defined as in Section \ref
{section1.2.3}. Given a realization of $\{\Gamma_d, 0 \le d \le
\theta_0\}$, one can construct a $\mathcal{P}([0,\theta_0])$-valued
process $\mu(t)$. For a fixed value of $t$ and a subinterval $(a,b]$
in $[0,\theta_0]$, it assigns a mass
\[
\mu(t)(a,b] = \frac{\Gamma_b(t) - \Gamma_a(t)}{\Gamma_{\theta_0}(t)}.
\]
This defines a probability measure uniquely, which we denote by the
following notation:
\[
\mu(t)(A) = \frac{1}{\Gamma_{\theta_0}(t)}\int_0^{\theta_0}
1(s\in A) \Gamma_{ds}(t)  \qquad \mbox{for all } A\in\mathcal
{B}([0,\theta_0]).
\]
We have the following skew-product decomposition result. Recall the
definition of the Fleming--Viot processes from Section~\ref{models}.

\begin{prop}\label{flemingviot}
Let $\sigma_0$ be the hitting time of zero for the process $\Gamma
_{\theta_0}$. There then exists an FV$[0,\theta_0]$ process $ \{
\nu(t), t\ge0 \}$, independent of $\Gamma_{\theta_0}$,
such that
\[
\mu(t) = \nu( 4 C_t ), \qquad  \mbox{where } C_t =
\int_0^{t} \frac{ds}{\Gamma_{\theta_0}(s)},\ t < \sigma_0.
\]
\end{prop}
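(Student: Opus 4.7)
The plan is to reduce the measure-valued statement to Proposition \ref{vsmproof} by testing $\mu(t)$ against step functions, i.e.\ by fixing an arbitrary finite partition of $[0,\theta_0]$ and showing that the resulting vector of masses evolves as a Wright--Fisher diffusion in the time scale $4C_t$. The two essential inputs are (i) the additive decomposition $Y_x^\theta = Y_x^0 + Y_0^\theta$, which turns increments of $\Gamma_\cdot(t)$ in the dimension variable into independent BESQ processes whose sum is $\Gamma_{\theta_0}$, and (ii) the fact, from Proposition \ref{vsmproof}, that the market weights of a VSM are Wright--Fisher, independent of the total-sum process.

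First, fix a partition $0=d_0<d_1<\cdots<d_n=\theta_0$ and set $A_i=(d_{i-1},d_i]$, $\theta_i=d_i-d_{i-1}$. Writing
\[
\Delta_i(t) := \Gamma_{d_i}(t) - \Gamma_{d_{i-1}}(t) = [Y^0_{F(d_i)}(t) - Y^0_{F(d_{i-1})}(t)] + [Y^{d_i}_0(t) - Y^{d_{i-1}}_0(t)]
\]
and invoking the independent-increments structure of the two $C[0,\infty)$-valued Lévy processes $Y^0$ and $Y^{\cdot}_0$, the $\Delta_i$'s are independent BESQ$^{\theta_i}$ processes with starting values $F(d_i)-F(d_{i-1})$, summing to $\Gamma_{\theta_0}$. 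This is exactly the BESQ system that underlies $V(\theta_1/2,\ldots,\theta_n/2)$ through the time change in \eqref{vsmbes}.

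Because $\Gamma_{\theta_0}>0$ on $[0,\sigma_0)$, the map $\tau(s):=4C_s$ is a strictly increasing absolutely continuous bijection onto $[0,\tau(\sigma_0-))$, and its inverse $\Lambda$ satisfies $\Lambda'(t)=\Gamma_{\theta_0}(\Lambda(t))/4$ --- precisely the $\Lambda$ appearing in \eqref{vsmbes}. Consequently $\tilde X_i(t):=\Delta_i(\Lambda(t))$ has the law $V(\theta_1/2,\ldots,\theta_n/2)$, and a direct substitution yields
\[
\bigl(\mu(t)(A_i)\bigr)_{i=1}^n = \Bigl(\tfrac{\Delta_i(t)}{\Gamma_{\theta_0}(t)}\Bigr)_{i=1}^n = \Bigl(\tfrac{\tilde X_i(\tau(t))}{S_{\tilde X}(\tau(t))}\Bigr)_{i=1}^n.
\]
By Proposition \ref{vsmproof} the right-hand side is a WF$(\theta_1/2,\ldots,\theta_n/2)$ diffusion in the variable $\tau(t)$, independent of $S_{\tilde X}$ and hence of $\Gamma_{\theta_0}$, since $S_{\tilde X}$ and $\Gamma_{\theta_0}$ generate the same $\sigma$-algebra through the mutually inverse random time changes $\Lambda,\tau$.

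Finally, I would define $\nu(u):=\mu(\Lambda(u))$ for $u\in[0,\tau(\sigma_0-))$. The displayed identity then says $(\nu(u)(A_i))_{i=1}^n$ is a Wright--Fisher diffusion $J(\theta_1/2,\ldots,\theta_n/2)$ for \emph{every} finite partition of $[0,\theta_0]$, so by the characterisation in Section \ref{models}(iii) the process $\nu$ has the FV$[0,\theta_0]$ law, and $\mu(t)=\nu(\tau(t))=\nu(4C_t)$ by construction. The step I expect to be the main obstacle is promoting the independence from partition-marginals to the full measure-valued path: it requires a standard but non-trivial monotone-class/$\pi$-$\lambda$ argument that the cylindrical events determined by evaluations against indicators of intervals generate the Borel $\sigma$-algebra on the càdlàg path space of $\mathcal{P}([0,\theta_0])$-valued trajectories under the Prokhorov metric, together with a consistency check that the Wright--Fisher laws obtained from refinements of partitions patch coherently into a single measure-valued diffusion.
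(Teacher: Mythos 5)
Your outline is, in substance, the paper's own argument: reduce to finite partitions, observe that the pieces of $\Gamma$ are independent BESQ processes summing to $\Gamma_{\theta_0}$, apply the skew-product result, and define $\nu$ by undoing the time change $4C_t$; the consistency worry you raise at the end is handled in the paper by noting that $\nu(u)$ is derived from the measure $\Gamma_{ds}(\tau_u)$ (your definition $\nu(u)=\mu(\Lambda(u))$ makes countable additivity and refinement-consistency automatic, which is fine). Two remarks. First, a concrete shortfall: the definition of FV$[0,\theta_0]$ in Section \ref{models}(iii) requires the Wright--Fisher property for \emph{arbitrary} partitions of $[0,\theta_0]$ into Lebesgue measurable sets, whereas your verification only covers interval partitions $A_i=(d_{i-1},d_i]$, since you work with increments $\Gamma_{d_i}-\Gamma_{d_{i-1}}$ of the distribution-function parametrization. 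The paper instead takes arbitrary measurable $A_i$ and sets $Z_i(t)=\int_0^{\theta_0} 1(s\in A_i)\,\Gamma_{ds}(t)$ as in \eqref{pathwise}; the L\'evy structure of $(Y^0_x)$ and $(Y^\theta_0)$ in the index variable makes these independent BESQ processes of dimensions $\abs{A_i}$, and then the same argument goes through verbatim. This is an easy repair, but as written your appeal to ``the characterisation in Section \ref{models}(iii)'' is not yet earned. Second, an inefficiency rather than an error: routing through Proposition \ref{vsmproof} means you time-change the BESQ vector into a VSM via $\Lambda$ and then immediately evaluate at $\tau(t)=4C_t$, undoing it; the paper applies Proposition \ref{bestimechange} directly in BESQ time, which yields at once the Wright--Fisher process $\nu$ and its independence from $\zeta=\Gamma_{\theta_0}$, without the detour of arguing that $S_{\tilde X}$ and $\Gamma_{\theta_0}$ generate the same $\sigma$-algebra under mutually inverse random time changes. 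Finally, the promotion of partition-wise independence to independence of the full measure-valued path is indeed glossed in both your sketch and the paper; the common-refinement plus $\pi$-system argument you indicate is the right way to close it.
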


The proof is essentially one step away from the simpler
finite-dimensional version that follows.

\begin{prop}\label{bestimechange}
Let $Z=(Z_1, \ldots,Z_n)$ be a vector of $n$ independent BESQ
processes, of dimensions $\theta_1, \ldots, \theta_n$.
Let $\zeta$ be the sum $\sum_{i=1}^n Z_i$, which is a BESQ of
dimension $\theta_0=\theta_1+ \cdots+\theta_n$. Assume that $\zeta
(0) >0$ and let
%
\begin{equation}\label{whatissigma0}
\sigma_0 = \inf\{ t > 0\dvtx  \zeta(t) =0 \}.
\end{equation}

Then, there is an $n$-dimensional diffusion $\nu$, independent of
$\zeta$, and having law $J(\theta_1/2, \ldots, \theta_n/2)$, for which
%
\begin{equation}\label{bestime}
Z(t) = \zeta(t) \nu( 4C_t ),\qquad C_t=\int_0^{t} \frac
{ds}{\zeta(s)},\   t < \sigma_0.
\end{equation}
\end{prop}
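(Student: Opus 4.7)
The plan is to apply It\^o's formula to the ratio $\mu_i(t):=Z_i(t)/\zeta(t)$ on $[0,\sigma_0)$, verify that after the natural time change $\tau=4C_t$ the semimartingale characteristics of $\mu$ coincide with those of the Wright--Fisher diffusion $J(\theta_1/2,\ldots,\theta_n/2)$ of \eqref{whatisjacobi}--\eqref{whatistsigma}, and then deduce independence from $\zeta$ via orthogonality of the relevant martingales.

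Write $dZ_i=\theta_i\,dt+2\sqrt{Z_i}\,dW_i$ for independent Brownian motions $W_1,\ldots,W_n$, so $d\zeta=\theta_0\,dt+2\sqrt{\zeta}\,dB$ with $dB:=\sum_k\sqrt{Z_k/\zeta}\,dW_k$ a Brownian motion by L\'evy's criterion. Using $d\langle Z_i,Z_j\rangle=4Z_i\delta_{ij}\,dt$, $d\langle Z_i,\zeta\rangle=4Z_i\,dt$, and $d\langle\zeta\rangle=4\zeta\,dt$, It\^o's formula yields on $\{t<\sigma_0\}$
\[
d\mu_i=\frac{\theta_i-\theta_0\mu_i}{\zeta}\,dt+dN_i,\qquad dN_i:=\frac{2\sqrt{Z_i}}{\zeta}\,dW_i-\frac{2\mu_i}{\sqrt{\zeta}}\,dB,
\]
with $d\langle N_i,N_j\rangle=(4/\zeta)\mu_i(\delta_{ij}-\mu_j)\,dt$ and, crucially, $d\langle N_i,B\rangle\equiv0$ (a direct calculation cancels the two $dW_i\cdot dW_k$ and $dB\cdot dB$ contributions). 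Since $\zeta>0$ on $[0,\sigma_0)$, $C_t$ is $C^1$ and strictly increasing; substituting $dt=(\zeta/4)\,d\tau$ in the drift and in the time-changed covariations, the process $\nu_i(\tau):=\mu_i(C^{-1}(\tau/4))$ satisfies
\[
d\nu_i=\tfrac12\bigl(\tfrac{\theta_i}{2}-\tfrac{\theta_0}{2}\nu_i\bigr)\,d\tau+d\tilde N_i,\qquad d\langle\tilde N_i,\tilde N_j\rangle=\nu_i(\delta_{ij}-\nu_j)\,d\tau,
\]
i.e.\ exactly the martingale problem for $J(\theta_1/2,\ldots,\theta_n/2)$; uniqueness of that martingale problem identifies the law of $\nu$.

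For independence, the orthogonality $\langle N_i,B\rangle\equiv0$ is the key input: after (if necessary) enlarging the probability space, the Kunita--Watanabe decomposition represents $(N_1,\ldots,N_n)$ as stochastic integrals against a Brownian motion $W'$ independent of $B$. Since $\zeta$ is a functional of $B$ alone via its own one-dimensional SDE, and the clock $C$ is a measurable functional of $\zeta$, the process $\nu$ ends up being a functional of $W'$ run on a $\sigma(B)$-measurable time scale. The cleanest formalization is to verify the joint martingale problem for $(\nu,\zeta)$ in the $\tau$-scale and observe that the joint generator splits as a sum of a Wright--Fisher generator acting on $\nu$ alone and a time-changed BESQ$^{\theta_0}$ generator acting on $\zeta$ alone; uniqueness in law then forces the finite-dimensional distributions to factor. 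The hard part is precisely this decoupling---a priori $\nu$ and $\zeta$ are coupled through $C$, and only the orthogonality $\langle N_i,B\rangle=0$ makes the factorization go through. The sole remaining loose end is verifying $4C_{\sigma_0}=\infty$ a.s.\ so that $\nu$ is defined on all of $[0,\infty)$: when $\theta_0\ge2$ this is automatic from $\sigma_0=\infty$, while for $\theta_0<2$ it follows from the standard near-zero behaviour of BESQ, which yields $\int^{\sigma_0}\zeta^{-1}\,ds=\infty$.
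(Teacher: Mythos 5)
Your core argument is the same as the paper's: It\^o's formula applied to the ratio $Z_i/\zeta$, the time change by $4C_t$, and the identification of the resulting drift and covariation with those of $J(\theta_1/2,\ldots,\theta_n/2)$ (the paper packages the martingale part as $2\sqrt{R_i(1-R_i)}\,dM_i$ with $\iprod{M_i}=C_t$ and uses DDS Brownian motions plus uniqueness of the SDE, whereas you invoke well-posedness of the Wright--Fisher martingale problem; these are interchangeable). The genuine divergence is in the independence step. The paper computes $\iprod{\beta^*,M_i}\equiv 0$ for the Brownian motion $\beta^*$ driving $\zeta$ and appeals to Knight's theorem together with strong solvability of the $\zeta$-SDE. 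Your first suggested mechanism is shaky as stated: from $\iprod{N_i,B}\equiv 0$ one can indeed manufacture (by a pathwise rotation and L\'evy's criterion) a Brownian motion $W'$ independent of $B$ against which the $N_i$ are stochastic integrals, but the integrands still depend on the whole process $Z$, so the conclusion that ``$\nu$ is a functional of $W'$ run on a $\sigma(B)$-measurable clock'' does not follow from orthogonality alone --- this is exactly the difficulty that Knight's theorem (or an equivalent device) is there to resolve. Your second route, which you rightly call the cleanest, is sound and genuinely different from the paper's: in the $\tau$-scale the pair $(\nu,\tilde\zeta)$ with $\tilde\zeta=\zeta\circ\tau$ has zero cross-variation (again by $\iprod{N_i,B}=0$) and solves the martingale problem for a generator splitting as (Wright--Fisher in $\nu$) plus the generator of $d\tilde\zeta=\tfrac{\theta_0}{4}\tilde\zeta\,du+\tilde\zeta\,d\tilde B$; well-posedness of the two component problems gives well-posedness of the split joint problem, whose unique solution is the product law, and since $\zeta$ is recovered measurably from $\tilde\zeta$ by inverting $u\mapsto\tfrac14\int_0^u\tilde\zeta$, independence of $(\nu,\zeta)$ follows. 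This buys you an argument that avoids Knight's theorem (whose standard form requires pairwise orthogonality that the $M_i$ in the paper do not have among themselves) at the cost of verifying well-posedness of the joint martingale problem. One small correction: for $\theta_0\ge 2$ the statement $4C_{\sigma_0}=\infty$ is not ``automatic from $\sigma_0=\infty$''; you still need $\int_0^\infty \zeta^{-1}\,ds=\infty$, which is the logarithmic growth estimate \eqref{timechangeinf} used later in the paper.
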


\begin{remark} The condition $t < \sigma_0$ is clearly
necessary to guarantee that the time change $C_t$ does not blow up. For
$n=2$ this result was noted by Warren and Yor in~\cite{warrenyor}; see
also the thesis by Goia~\cite{goiathesis09}.
\end{remark}

\begin{pf*}{Proof of Proposition~\ref{bestimechange}}
By our assumption, each $Z_i$ satisfies the following SDE:
\[
dZ_i(t)=\theta_i \,dt + 2\sqrt{Z_i(t)}\,d\beta_i(t), \qquad  i=1,2,\ldots,n.
\]
Let $R_i=Z_i/\zeta$. The SDE for $R_i$ for $t < \sigma_0$ can then be
found by It\^o's rule:

\begin{eqnarray*}
dR_i(t)&=& \zeta^{-1}\,dZ_i(t) + Z_i(t)\,d\zeta^{-1}(t) + d\langle
Z_i,\zeta^{-1} \rangle\\
&=& \zeta^{-1} \bigl[\theta_i \,dt + 2\sqrt{Z_i(t)}\,d\beta_i(t)
\bigr]+ Z_i(t) [-\zeta^{-2}\,d\zeta(t) + \zeta^{-3}\,d\langle\zeta \rangle
(t) ]\\
&&{} - 4Z_i\zeta^{-2}\,dt\\
&=& [\theta_i\zeta^{-1} - \theta_0 Z_i\zeta^{-2} +4Z_i\zeta
^{-2} - 4Z_i \zeta^{-2} ]\,dt\\
&&{}+ 2\zeta^{-1}\sqrt{Z_i(t)}\,d\beta_i(t) -2\zeta^{-2}Z_i(t)\sum
_{j=1}^n\sqrt{Z_j}\,d\beta_j\\
&=& \zeta^{-1} [\theta_i - \theta_0 R_i ]\,dt + 2\zeta
^{-1}\sqrt{Z_i(t)} [1- \zeta^{-1}Z_i(t) ]\,d\beta_i(t)\\
&&{}
-2\zeta^{-2}Z_i\sum_{j\neq i}\sqrt{Z_j}\,d\beta_j\\
&=&\zeta^{-1} [\theta_i - \theta_0 R_i ]\,dt + \zeta
^{-1/2}2\sqrt{R_i}\sum_{j=1}^n \bigl(1\{i=j\} - \sqrt{R_iR_j}
\bigr)\,d\beta_j(t).
\end{eqnarray*}
Define the sequence of local martingales
%
\begin{equation}\label{howtodefmi}
dM_i(t)= \frac{\zeta^{-1/2}}{\sqrt{1- R_i}}\sum_{j=1}^n\bigl (1\{
i=j\} - \sqrt{R_iR_j} \bigr)\,d\beta_j(t)
\end{equation}
so that
%
\begin{equation}\label{msde}
dR_i(t)=\zeta^{-1} [\theta_i - \theta_0 R_i ]\,dt + 2\sqrt
{{R_i}(1-R_i)}\,dM_i(t).
\end{equation}
However, since
\[
(1-R_i )^2 + \sum_{j\neq i}R_iR_j= (1-R_i )^2 +
(1-R_i )R_i=(1-R_i),
\]
it is guaranteed that $\langle M_i \rangle(t)=C_t$.

Let $\tau_u$ be the inverse of the increasing function $4C_t$, that
is, $\tau_u=\inf\{ t\dvtx  C_t \ge u/4 \}$. Let $\nu=(\nu
_1,\nu_2,\ldots,\nu_n)$ be the process obtained by time-changing $R$
by $\tau$. In other words, $\nu_i(u)=R_i(\tau_u)$. Applying this
time change to the SDE for $R_i$ in~\eqref{msde}, we get
%
\begin{equation}\label{nuwt}
d\nu_i(t) = \tfrac{1}{4} [\theta_i - \theta_0 \nu_i ]\,dt
+ \sqrt{\nu_i(1-\nu_i)}\tw_i(t),\vadjust{\goodbreak}
\end{equation}
where $\tw_i$ is the Dambis--Dubins--Schwarz (DDS) Brownian motion
(see~\cite{yorbook}, page~181) associated with $M_i$. This is the SDE
for $J(\theta_1/2, \ldots, \theta_n/2)$ (see Section~\ref{models})
once we prove that the diffusion matrix is given by $\tilde{\sigma}$. To
compute it, note that
\[
\langle\nu_i, \nu_j \rangle(4C_t)= \langle R_i, R_j \rangle(t)=
\frac{4}{\zeta
(t)} \sqrt{R_i(t) R_j(t)} \bigl( 1\{i=j\} - \sqrt{R_iR_j} \bigr).
\]
Now, changing time by $\tau$, we immediately get $\langle\nu_i, \nu
_j \rangle=\tilde{\sigma}(i,j),$ as desired.

All that now remains to show is that the process $\nu$ above is
independent of $\zeta$. The SDE for $\zeta$ involves another martingale:
%
\begin{equation}\label{sdezeta}
d\zeta(t) = \theta_0 \,dt + 2\sqrt{\zeta}\sum_{j=1}^n \sqrt
{R_j}\,d\beta_j(t)=\theta_0 \,dt + 2\sqrt{\zeta}\,d\beta^*(t).
\end{equation}
Here, $\beta^*$ is the local martingale $\int\sum\sqrt{R_j}\,d\beta
_j$, which is a standard Brownian motion by L\'evy's theorem (\cite{yorbook}, page 150). Note that
\[
d\langle\beta^*,M_i \rangle(t)= \frac{1}{\sqrt{1-R_i}} \biggl[\sqrt
{R_i} (1-R_i ) - \sqrt{R_i}\sum_{j\neq i}R_j \biggr]=0.
\]
Thus, by Knight's theorem (\cite{yorbook}, page 183), the DDS Brownian
motions of $(M_1, \ldots, M_n)$ and $\beta^*$ are independent. This
shows independence of $(\tw_1,\ldots,\break\tw_n)$ and $\beta^*$. It is
known (\cite{yorbook}, page 439) that $\zeta$ is a strong solution of
the SDE~\eqref{sdezeta}. Thus, from the independence proved above, it
follows that $\zeta$ is independent of the vector $(\tw_1,\ldots,\tw
_n)$ and hence $\nu$ in~\eqref{nuwt}. This completes the proof.
\end{pf*}

\begin{pf*}{Proof of Proposition~\ref{vsmproof}}
It will be useful for us now to analyze the time change $C_t$ in
Proposition~\ref{bestimechange}. Let us define $S(u)= \zeta(
\tau_u )$,
where $\tau$, used in the proof above, is the inverse of $4C_t$. Since
the derivative of $C_t$ with respect to $t$ is $1/\zeta(t)$, it
follows that
\[
\frac{d}{du} \tau_u = \frac{1}{4/\zeta(\tau_u)}= \frac{1}{4} S(u).
\]
In other words, $4\tau_u= \int_0^u S(t)\,dt$. Thus, if we define
$X_i(u)= Z_i(\tau_u)$ for $i=1,2,\ldots,n$, it follows that
\[
X_i(u) = Z_i ( \tau_u ), \qquad  \tau_u = \frac{1}{4}
\int_0^u S(t) \,dt,
\]
which is exactly the solution of $V(\theta_1/2, \ldots, \theta_n/2)$
described in the \hyperref[intro]{Introduction}.

The first part of Proposition~\ref{vsmproof} is now established. The
rest follows from known invariant distributions of Wright--Fisher
diffusions; see, for example,~\cite{ethkur81}.
\end{pf*}

\begin{pf*}{Proof of Proposition~\ref{flemingviot}}
Consider any finite sequence of Lebesgue measurable sets $\{ A_1, A_2,
\ldots, A_n\}$. By our construction of the L\'evy process of BESQ
processes, it follows that
%
\begin{equation}\label{pathwise}
Z_i(t) = \int_0^{\theta_0} 1 ( s\in A_i ) \Gamma
_{ds}(t), \qquad  i=1,2,\ldots,n,
\end{equation}
are independent BESQ processes of respective dimensions $\theta_1,
\ldots, \theta_n$, where $\theta_i$ is the Lebesgue measure of
$A_i$. Note that the sum $\zeta=\Gamma_{\theta_0}$ is a BESQ process
of dimension $\theta_0=\theta_1 + \cdots+ \theta_n$.

By Proposition~\ref{bestimechange}, there is a Wright--Fisher
diffusion process $\nu(t)$ such that the time change relationship
\eqref{bestime} holds for all $t$ less than $\sigma_0$. As before,
let $\tau_u$ be the inverse of the increasing continuous function $4C_t$.

One can then define a measure on the $\sigma$-algebra generated by $\{
A_1, \ldots, A_n\},$ by defining
\[
\nu(u)(A_i) := \nu_i(u)= \frac{1}{\Gamma_{\theta_0}(\tau
_u)}Z_i(\tau_u), \qquad  i=1,2,\ldots,n.
\]
Note that in the pathwise construction~\eqref{pathwise}, the time
change is the same for all choices of $n$ and sets $\{ A_1, A_2, \ldots
, A_n\}$. Thus, it follows that the measure $\nu(u)$ is consistently
defined over any refinement of the sets $A_1, \ldots, A_n$. Moreover,
$\nu(u)$ is countably additive since it is derived from the measure
$\Gamma_{ds}(\tau_u)$. Thus, by a standard argument invoking the
Carath\'eodory extension theorem, a unique probability measure $\nu
(u)$ is established on the Borel sets in $[0,\theta_0]$. It is now
clear that the entire measure-valued process $\{ \nu(u), u\ge0\}$
satisfies all the defining properties of the Fleming--Viot model as
described in Section~\ref{models}.
\end{pf*}

\subsection{Weights in a subset of the market}\label{section:submarket}

Thus far, our analysis has considered the entire vector of market
weights. It is often not possible to deal with all the stocks in a
single large market. Transactions are expensive and the different
market indices often concentrate on a chosen subcollection of stocks.

Thus, it is of interest to study the following problem. Suppose,
without loss of generality, we consider the first $m$ out of the total
of $n$ stocks in the equity market and define the process of submarket
weights as the vector
%
\begin{equation}\label{submarket}
\tilde\mu= ( \tilde\mu_1, \ldots, \tilde\mu_m ), \qquad
\tilde\mu_i(t) = \frac{X_i(t)}{\tilde S(t)}, \qquad  \tilde
S(t)= \sum_{i=1}^m X_i(t).
\end{equation}
Can one describe the behavior of these submarket weights? The answer is
``yes,'' and the logic behind this relies on a self-recursive property
of the VSM models. Our next proposition makes this clear.

\begin{prop}
Consider the submarket weight vector $\tilde\mu$, as defined above.
There then exists a Wright--Fisher diffusion\vadjust{\goodbreak} $\tilde\nu$, independent of
the sum process $\sum_i \tilde\mu_i$, such that
\[
\tilde\mu_i(t) = \tilde\nu_i \biggl( \int_0^t \frac{du}{\sum_{j=1}^m
\tilde\mu_j(u)} \biggr), \qquad  i=1,\ldots,m,
\]
the equality holding for all $t$ until $\sum_i \tilde\mu_i$ hits zero.
\end{prop}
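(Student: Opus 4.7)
My approach is to lift the question to the underlying independent BESQ processes and apply the skew-product decomposition (Proposition \ref{bestimechange}) to the sub-collection indexed by $\{1,\dots,m\}$.

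From the time-change relation \eqref{vsmbes}, we have $X_i(t)=Z_i(\Lambda(t))$ where $Z_1,\dots,Z_n$ are mutually independent BESQ processes of dimensions $2\delta_i$, and $\Lambda(t)=\tfrac14\int_0^t S(u)\,du$. Because the $Z_i$ are independent, the sub-vector $\tilde Z:=(Z_1,\dots,Z_m)$ is itself a collection of $m$ independent BESQ processes, independent of $(Z_{m+1},\dots,Z_n)$. Applying Proposition \ref{bestimechange} to $\tilde Z$ alone produces a Wright--Fisher diffusion $\tnu$ with parameters $(\delta_1,\dots,\delta_m)$, independent of $\tilde\zeta:=\sum_{i=1}^m Z_i$, such that $Z_i(s)/\tilde\zeta(s)=\tnu_i(4\tilde C_s)$ with $\tilde C_s=\int_0^s du/\tilde\zeta(u)$ for $s$ strictly less than the first zero of $\tilde\zeta$. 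Since $\tnu$ is a measurable functional of $\tilde Z$ alone, it is also independent of $(Z_{m+1},\dots,Z_n)$; thus $\tnu$ is independent of the entire vector $(\tilde\zeta,Z_{m+1},\dots,Z_n)$.

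Evaluating the skew-product relation at $s=\Lambda(t)$ yields
\[
\tilde\mu_i(t)=\frac{Z_i(\Lambda(t))}{\tilde\zeta(\Lambda(t))}=\tnu_i\bigl(4\tilde C_{\Lambda(t)}\bigr).
\]
The substitution $s=\Lambda(v)$, using $ds=\tfrac14 S(v)\,dv$ and $\tilde\zeta(\Lambda(v))=\tilde S(v)$, converts the inner time-change into
\[
4\tilde C_{\Lambda(t)}=\int_0^t\frac{S(v)}{\tilde S(v)}\,dv=\int_0^t\frac{dv}{\sum_{j=1}^m\mu_j(v)},
\]
which is the time-change the proposition claims (since $\sum_{j\le m}\tilde\mu_j\equiv 1$, the denominator in the statement must be read as the full-market weights $\mu_j$; this also gives meaning to the ``hits zero'' clause, $\sum_{j\le m}\mu_j(t)=\tilde S(t)/S(t)$ being the share of the market captured by the sub-collection).

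For the independence assertion, observe that $\sum_{j\le m}\mu_j(t)=\tilde\zeta(\Lambda(t))/\zeta(\Lambda(t))$, while $\Lambda$ is the inverse of $s\mapsto 4\int_0^s du/\zeta(u)$. Hence both $\Lambda$ and the ratio $\tilde\zeta/\zeta$ are measurable functionals of $(\tilde\zeta,Z_{m+1},\dots,Z_n)$, a vector from which $\tnu$ has already been shown to be independent. The main delicate point is precisely this bookkeeping: one must confirm that the whole time-change is assembled out of the partial sums $\tilde\zeta$ and $\zeta-\tilde\zeta$ and does not ``leak'' any information about $\tnu$. Once this decoupling is laid out explicitly, the required independence of $\tnu$ from the time-change process follows at once.
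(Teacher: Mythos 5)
Your proposal is correct and takes essentially the same route as the paper: the paper also applies the skew-product decomposition of Proposition \ref{bestimechange} to the sub-collection of independent BESQ processes and then computes the composed time change (phrased there via Lemma \ref{nested} for Wright--Fisher diffusions, with $1-J_1$ playing the role of your $\sum_{j\le m}\mu_j$), finishing with the same independence bookkeeping. Your reading of the denominator in the statement as the aggregate full-market weight $\sum_{j\le m}\mu_j = \tilde S/S$ of the sub-collection matches the paper's intent, as the Lemma makes clear.
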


Since we have already shown that the market weights have the same law
as the Wright--Fisher models, we prove the proposition for the latter.
We take, without loss of generality, $m=n-1$, the case of a general $m$
following similar lines.

\begin{lemma}\label{nested}
Let $J=(J_1, \ldots, J_n)$ be the multidimensional diffusion $J(\delta
_1,\break \ldots,  \delta_n)$. Consider the process
\[
Y= \biggl( \frac{J_2}{1-J_1}, \frac{J_3}{1-J_1}, \ldots, \frac
{J_n}{1-J_1} \biggr),
\]
up to the stopping time $\tau_1=\inf\{ t\ge0\dvtx  J_1(t)=1
\}$.

There is then a diffusion $\tilde\nu$ which is $J(\delta_2, \delta
_3, \ldots, \delta_n)$, independent of $J_1$, such that
\[
Y(t)= \tilde\nu\biggl( \int_0^t \frac{ds}{1-J_1(s)} \biggr), \qquad
0\le t < \tau_1.
\]
\end{lemma}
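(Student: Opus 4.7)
The strategy is to realize the Wright--Fisher diffusion $J$ through the BESQ embedding of Proposition \ref{bestimechange}, and then apply the same proposition a second time to a subsystem of the underlying BESQ processes. Let $Z=(Z_1,\ldots,Z_n)$ be independent BESQ processes of dimensions $(2\delta_1,\ldots,2\delta_n)$, with sums $\zeta=\sum_iZ_i$ and $\tilde\zeta=\sum_{i\ge 2}Z_i$, and let $\tau_u$ denote the inverse of $4C_t=4\int_0^t ds/\zeta(s)$. By Proposition \ref{bestimechange}, the process $u\mapsto Z(\tau_u)/\zeta(\tau_u)$ has law $J(\delta_1,\ldots,\delta_n)$, so without loss of generality I identify $J$ with it.

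In this realization, $1-J_1(u)=\tilde\zeta(\tau_u)/\zeta(\tau_u)$, and therefore for $i=2,\ldots,n$,
\[
Y_{i-1}(u)=\frac{J_i(u)}{1-J_1(u)}=\frac{Z_i(\tau_u)}{\tilde\zeta(\tau_u)}.
\]
Now I would apply Proposition \ref{bestimechange} to the independent BESQ subsystem $(Z_2,\ldots,Z_n)$. This produces a Wright--Fisher diffusion $\tilde\nu$ of law $J(\delta_2,\ldots,\delta_n)$, independent of $\tilde\zeta$, such that $Z_i(s)/\tilde\zeta(s)=\tilde\nu_{i-1}(4\tilde C_s)$ for all $s$ before $\tilde\zeta$ hits zero, where $\tilde C_s=\int_0^s dr/\tilde\zeta(r)$. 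Combining these identities gives $Y_{i-1}(u)=\tilde\nu_{i-1}(4\tilde C_{\tau_u})$, and a change of variable $s=4C_r$ shows that
\[
\int_0^u \frac{ds}{1-J_1(s)}=\int_0^{\tau_u}\frac{\zeta(r)}{\tilde\zeta(r)}\cdot\frac{4\,dr}{\zeta(r)}=4\tilde C_{\tau_u},
\]
matching the time change asserted in the lemma. The condition $t<\tau_1$ corresponds exactly to $\tilde\zeta(\tau_t)>0$, which is precisely the window on which the preceding identity is valid.

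For the independence claim, note that $\tilde\nu$ and $\tilde\zeta$ are measurable functionals of $(Z_2,\ldots,Z_n)$, and $Z_1$ is independent of $(Z_2,\ldots,Z_n)$; combined with the independence $\tilde\nu\perp\tilde\zeta$ obtained from Proposition \ref{bestimechange}, this makes $Z_1,\tilde\zeta,\tilde\nu$ mutually independent. Since $J_1$ is a measurable functional of $(Z_1,\tilde\zeta)$ alone (through $\zeta$ and the time change $\tau$), I conclude $J_1\perp\tilde\nu$. The main subtlety I expect to manage carefully is the bookkeeping for the nested time changes $\tau$ and $\tilde\tau$, and the matching of the stopping time $\tau_1$ with the exit time of $\tilde\zeta$ from the positive half-line; once those are aligned, the rest of the argument is a clean application of Proposition \ref{bestimechange}.
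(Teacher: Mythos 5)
Your proposal follows essentially the same route as the paper: realize $J$ through the BESQ time change of Proposition \ref{bestimechange}, apply that proposition again to the subsystem $(Z_2,\ldots,Z_n)$, match the nested time changes (your substitution $s=4C_r$ is the integrated form of the paper's derivative computation), and deduce independence of $\tilde\nu$ from $J_1$ via independence from $Z_1$ and $\tilde\zeta$. The only detail the paper supplies that your ``without loss of generality'' glosses over is the verification that $\int_0^t ds/\zeta(s)$ diverges (treated there by cases $d<2$, $d>2$, $d=2$), which is what guarantees the time-changed BESQ ratio is defined on all of $[0,\infty)$ and can legitimately be identified with $J$.
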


\begin{pf}
Let $Z_1, Z_2, \ldots, Z_n$ be $n$ independent BESQ processes with
respective dimensions $2\delta_1, 2\delta_2, \ldots, 2\delta_n$.
Then, as shown in Proposition~\ref{bestimechange}, the process
%
\begin{equation}\label{whatisji}
J_i(t)= \frac{Z_i(\tau_t)}{\zeta(\tau_t)},  \qquad \mbox{where }
\tau_t=\inf\biggl\{ u\ge0\dvtx  4\int_0^u \frac{ds}{\zeta(s)} \ge t
\biggr\},
\end{equation}
is distributed as $J(\delta_1, \ldots, \delta_n)$.

The proof utilizes the independence of the BESQ processes to derive the
stated result. We first claim that the time change $\int_0^t ds/\zeta
(s)$ grows to infinity almost surely. To see this, note that $\zeta$
is a BESQ process of dimension $d=\sum_{i=1}^n \delta_i$. When $d<
2$, the $\zeta$ process is recurrent, and thus the time change $\int
_0^t ds/\zeta(s)$ grows to infinity in finite time.

When $d > 2$, it is known (see, e.g.,~\cite{ferkarspt}, page 43) that
%
\begin{equation}\label{timechangeinf}
\lim_{u\rightarrow\infty} \frac{1}{\log u}\int_0^u \frac
{ds}{\zeta(s)}= \frac{1}{d - 2}.
\end{equation}
Thus, the time change again grows to infinity with time $u$. The case
when $d=2$ can be sandwiched between the two cases above by using
stochastic comparison theorems for BESQ processes. Thus, the process
$J_i(t)$ in~\eqref{whatisji} has been constructed for all time $0\le t
< \infty$.

Now, let $\zeta_1=\sum_{i=2}^n Z_i$ be the sum of all the BESQ
processes except the first one. Exactly as before, there exists a
$(n-1)$-dimensional diffusion\vadjust{\goodbreak} $\tilde\nu=(\tilde\nu_2, \ldots,
\tilde\nu_n)$, independent of $\zeta_1$, with law $J(\delta_2,
\ldots, \delta_n)$ such that
\[
Z_{i}(t) = \zeta_1(t)\tilde\nu_i(4h_t), \qquad  h_t=\int_0^t \frac
{ds}{\zeta_1(s)},\ i=2,\ldots,n.
\]

Thus, for any $i > 1$, we get
\[
\frac{J_i(t)}{1-J_1(t)}= \frac{Z_i(\tau_t)}{\zeta_1(\tau_t)}=
\tilde\nu_{i} ( 4h(\tau_t) ) \qquad  \mbox{where }
h(\tau_t)= h_{\tau_t}.
\]

Let us now analyze the time change $h(\tau_t)$. We get
\[
4\frac{d}{dt} h(\tau_t)= 4h'(\tau_t) \tau'_t = \frac{\zeta(\tau
_t)}{\zeta_1(\tau_t)}=\frac{1}{1-J_1(t)}.
\]
The computation of $\tau_t'$ was carried out in the proof of
Proposition~\ref{vsmproof}.

Thus, we get the following description: for $i=2,\ldots,n$,
\[
\frac{J_i(t)}{1-J_1(t)}= \tilde\nu_{i} \biggl( \int_0^t \frac
{ds}{1-J_1(s)} \biggr) \qquad  \mbox{for all } t < \tau_1.
\]

Note that $\tilde\nu$ is independent of both $\zeta_1$ and $Z_1$. Thus,
$\tilde\nu$ is also independent of $J_1$. This completes the proof of
this result.
\end{pf}

\section{Transition density of the market weights}\label{trandenmarket}

Finally, we combine all the results we have derived so far to obtain
the transition density for the market weights of the VSM model.

Our first step is to analyze the stopping time $\varsigma_a$ in
Proposition~\ref{vsmmarket}. To do this we return to the SDE \eqref
{vsmeq2} in the definition of the VSM model. As done in \cite
{ferkar05} we can express this SDE as
\[
dX_i(t)= \frac{\delta_i}{2} S(t)\,dt + \sqrt{X_i(t) S(t)} \,dW_i(t),
\qquad
i=1,2,\ldots,n.
\]
Summing over all the coordinates, we recover the SDE for the process
$S$ as
\[
d S(t) = \frac{d}{2} S(t) \,dt + \sqrt{S(t)} \sum_{i=1}^n \sqrt
{X_i(t)} \,dW_i(t) = \frac{d}{2} S(t) \,dt + S(t)\,d\beta(t),
\]
where $\beta$ is the local martingale $\int S^{-1/2}(t)\sum\sqrt
{X_i}(t) \,dW_i(t)$, which is a standard Brownian motion by L\'evy's
theorem (\cite{yorbook}, page 150).

Thus, $S$ is a geometric Brownian motion and can be alternatively
expressed as
\[
S(t) = S(0) \exp\bigl( (d-1)t/2 + \beta(t) \bigr).
\]
Thus, when $S(0)=s$, we get
\[
\varsigma_a= \inf\{ t\ge0\dvtx  \beta(t) + (d-1)t/2 \ge\log
(a/s) \}.
\]

The density of the $\varsigma_a$ is well known and can be found in the
book by Borodin and Salminen~\cite{handbookBM}. To simplify notation,
let us temporarily define
\[
\gamma:= (d-1)/2  \quad  \mbox{and} \quad  \rho= \log(a/s).
\]
Note that $\rho$ is assumed to be positive.

The density of $\varsigma_a$ is then given by
%
\begin{eqnarray}\label{lawoftaua}
P ( \varsigma_a \in dt )&=& \frac{\log(a/s)}{\sqrt{2\pi
t^3}}\exp\biggl( -\frac{(\log(a/s)-\gamma t)^2}{2t} \biggr)\,dt\nonumber
\\[-8pt]
\\[-8pt]
&=&\frac{\rho}{\sqrt{2\pi t^3}}\exp\biggl( -\frac{(\rho-\gamma
t)^2}{2t} \biggr)\,dt.
\nonumber
\end{eqnarray}

Recall now from Proposition~\ref{vsmproof} that the process $S$ is
independent of the market weights $\mu$. Thus, $\mu$ and $\varsigma
_a$ are also independent. Suppose we denote the transition density
function of $\mu$ at time $t$ by $p(t,\xi,y)$, where the\vspace*{1pt} initial
position $\xi$ and the terminal position $y$ are both elements of
$\bndsimp$. Then, by the independence, it follows that
%
\begin{equation}\label{trandenexit}
\int_0^t p(t,\xi,y) P ( \varsigma_a \in dt ) = \varphi
_x(y), \qquad  x=s\xi=ae^{-\rho}\xi,
\end{equation}
where $\varphi$ is the exit density computed in Proposition~\ref{vsmmarket}.

Note that the above integral transform~\eqref{trandenexit} has a
single parameter $\rho$ if we fix $a=1$. Equation~\eqref{trandenexit} is an
integral transform. If this transform can be inverted, we can recover
$p(t,\xi,y)$ from $\varphi$. As we show below, this integral
transform is nothing but a Laplace transform in disguise.

To see this, note that for any function $h(t)$ (keeping $a=1$), we get
\begin{eqnarray*}
\int_0^{\infty}  h(t)P ( \varsigma_1 \in dt ) &=& \int
_0^\infty h(t) \frac{\rho}{\sqrt{2\pi t^3}} \exp\biggl( -\frac
{(\rho-\gamma t)^2}{2t} \biggr)\,dt\\
&=& \frac{\rho}{\sqrt{2\pi}}\int_0^\infty h(t)t^{-3/2} \exp\biggl(
-\frac{(\rho^2 + \gamma^2 t^2 - 2\rho\gamma t)}{2t} \biggr)\,dt\\
&=& \frac{\rho e^{\rho\gamma}}{\sqrt{2\pi}}\int_0^\infty
h(t)t^{-3/2} e^{-\gamma^2 t/2} \exp\biggl( -\frac{\rho^2}{2t}
\biggr)\,dt.
\end{eqnarray*}

If we now change the variable from $t$ to $u=1/t$, we get
%
\begin{equation}\label{finaleq}
\Upsilon(\rho)=\int_0^{\infty} h(t)P ( \varsigma_1 \in dt
) = \frac{\rho e^{\rho\gamma}}{\sqrt{2\pi}} \int_0^\infty
g(u) e^{-\rho^2u/2}\,du,
\end{equation}
where the function $g$ is defined by
\[
g(u)= h(1/u)u^{3/2} e^{-\gamma^2/(2u)}u^{-2} = h(1/u) u^{-1/2}
e^{-\gamma^2/(2u)}.
\]
Thus, we get that
%
\begin{equation}\label{twolaplace}
\sqrt{2\pi}\Upsilon(\rho)\rho^{-1}e^{-\rho\gamma} = \Lambda
(g) ( \rho^2/2 ),
\end{equation}
where $\Lambda(g)(\cdot)$ represents the Laplace transform of $g$. In
other words, the function $g$ (hence $h$) can be recovered by inverting
the Laplace transform.

We are now going to apply the preceding analysis to the function
$h(t)=p(t,\xi,y)$ for fixed values of $\xi$ and $y$ in the set
$\bndsimp$. In that case, from~\eqref{trandenexit} and the formula
\eqref{mwtden} (taking $a=1$ and $x=e^{-\rho}\xi$), we get that
\begin{eqnarray*}
\Upsilon(\rho)&=& ( 1 - e^{-\rho} )\sum_{m=0}^{\infty}
\frac{\Gamma(2m + d)}{m!\Gamma(m + d)} (1+ e^{-\rho}
)^{-2m-d} \\
&&{}\times\sum_{k\ge0: k_1 + \cdots+ k_n=m} \combi{m}{k_1\cdots
k_n} \prod_{i=1}^{n} (e^{-\rho}\xi_i)^{k_i} \operatorname{Dir}(y;
k+\delta), \qquad  y\in\bndsimp.
\end{eqnarray*}
Simplifying slightly, we get
\begin{eqnarray*}
\Upsilon(\rho)&=& ( 1 - e^{-\rho} )\sum_{m=0}^{\infty}
\frac{\Gamma(2m + d)}{m!\Gamma(m + d)} (1+ e^{-\rho}
)^{-2m-d}  e^{-m\rho}\\
&&{}\times\sum_{k\ge0: k_1 + \cdots+ k_n=m} \combi
{m}{k_1\cdots k_n} \prod_{i=1}^{n} (\xi_i)^{k_i} \operatorname{Dir}(y;
k+\delta), \qquad  y\in\bndsimp.
\end{eqnarray*}
In particular, this nice series representation allows us to take the
inverse Laplace transform inside the infinite sum and obtain the final formula:
\begin{eqnarray*}
g(u)&=& \sum_{m=0}^{\infty} \frac{\Gamma(2m + d)}{m!\Gamma(m + d)}
\varrho_m(u) \\
&&\hphantom{\sum_{m=0}^{\infty}}
{}\times\sum_{k: k_1 + \cdots+ k_n=m} \combi{m}{k_1\cdots k_n}
\prod_{i=1}^{n} (\xi_i)^{k_i} \operatorname{Dir}(y; k+\delta), \qquad  y\in
\bndsimp.
\end{eqnarray*}
Here, $\varrho_m(u)$ is defined by the Laplace transform formula
\[
\Lambda( \varrho_m ) ( \rho^2/2 )= \sqrt
{2\pi}\rho^{-1}e^{-(m+\gamma)\rho} ( 1 - e^{-\rho}
) (1+ e^{-\rho} )^{-2m-d}, \qquad  m=0,1,\ldots.
\]
Changing the variable back to $t=1/u$, we obtain
\begin{eqnarray*}
h(t)&=& h(1/u)= u^{1/2}e^{\gamma^2/(2u)}g(u)\\
&=&\sum_{m=0}^{\infty}
\frac{\Gamma(2m + d)}{m!\Gamma(m + d)} u^{1/2}e^{\gamma
^2/(2u)}\varrho_m(u)\\
&&\hphantom{\sum_{m=0}^{\infty}}
{} \times\sum_{k: k_1 + \cdots+ k_n=m} \combi{m}{k_1\cdots k_n}
\prod_{i=1}^{n} (\xi_i)^{k_i} \operatorname{Dir}(y; k+\delta)\\
&=&\sum_{m=0}^{\infty} \frac{\Gamma(2m + d)}{m!\Gamma(m + d)}
b_m(t)\sum_{k: k_1 + \cdots+ k_n=m} \combi{m}{k_1\cdots k_n} \prod
_{i=1}^{n} (\xi_i)^{k_i} \operatorname{Dir}(y; k+\delta).
\end{eqnarray*}
Here, the coefficients $b_m$ are given by [see~\eqref{finaleq}]
\[
\int_0^{\infty} b_m(t) t^{-3/2} e^{-\gamma^2t/2} \exp\biggl(
-\frac{\rho^2}{2t} \biggr)\,dt= \Lambda(\varrho_m)(\rho^2/2).
\]
This establishes Proposition~\ref{tranmarket}.

\section*{Acknowledgments} I am grateful to Chris Burdzy, Zhen-Qing
Chen and Jim Pitman for valuable comments and discussion. I also wish
to thank the two referees for an extremely careful reading of the
manuscript and for suggesting numerous points for improvement.

%

\printaddresses

\end{document}